\journalname{}
\newtheorem{theorem}{Theorem}
\newtheorem{proposition}{Proposition}
\newtheorem{corollary}{Corollary}
\newtheorem{example}{Example}
\newtheorem{definition}{Definition}
\newtheoremstyle{prestyle}
{0} 
{\topsep} 
{\itshape} 
{} 
{\bfseries} 
{.} 
{.5em} 
{} 
\theoremstyle{prestyle}
\theoremstyle{definition}	
\begin{document}

\begin{frontmatter}



\dochead{}

\title{Determinants and Inverses of banded Toeplitz Matrices over $\mathbb{F}_p$ Are Periodic}


\author[]{Chen Wang}
\ead{2120220677@mail.nankai.edu.cn}
\author[]{Chao Wang\corref{cor1}}
\ead{wangchao@nankai.edu.cn}
\address[mymainaddress]{Address: College of Software, Nankai University, Tianjin 300350, China}

\cortext[cor1]{Corresponding author.}

\begin{abstract}
	
	Banded Toeplitz matrices over $\mathbb{F}_p$, as a well-known class of matrices, have been extensively studied in the fields of coding theory and automata theory. In this paper, we discover that both determinants and inverses of banded Toeplitz matrices over $\mathbb{F}_p$ exhibit periodicity. For a Toeplitz matrix with bandwidth $k$, The period $P(f)$ is related to the parameters on the band and is independent of the order, with an upper limit of $P(f) \le p^{k-1}-1$. We provide an algorithm which can compute the determinant of any order banded Toeplitz matrix within $O(k^4)$. And its inverse can be represented by three submatrices of size $P(f)*P(f)$ located respectively on the diagonal, above the diagonal, and below the diagonal. Thus, the computational cost for calculating the inverse is fixed, and our algorithm can solve it within $O(k^5)+3kP(f)^2$. This is the first time that the periodicity of determinants and inverses of general banded Toeplitz matrices over $\mathbb{F}_p$ has been computed and proven.
\end{abstract}

\begin{keyword}
	
	
	banded Toeplitz matrix
	\sep determinant
	\sep inverse
	\sep finite field
	\sep linear feedback shift register

\end{keyword}

\end{frontmatter}

\section{Introduction and preliminaries \label{s1}}

Banded Toeplitz matrices, as shown in Eq. \ref{M}, are commonly used in linear systems obtained from the discretization of differential equations, and are widely applied in fields such as numerical analysis and signal processing, etc. \cite{ACETO20122960, ACETO20123857, poletti2021superfast}. Over finite fields, they are usually used as transition matrices for linear cellular automata, have been extensively studied in coding theory and automata theory etc. \cite{dumas2012computational, gray2006toeplitz, macwilliams1977theory, marti2011reversibility, silverman2008introduction}.
\begin{equation}
	\label{M}
	M_n = 
	\begin{bmatrix}
		
		c_{0} & \cdots & c_{R} & 0 & \cdots & 0\\
		\vdots & \ddots &  & \ddots & \ddots & \vdots\\
		c_{-L} &  & \ddots &  & \ddots & 0\\
		0 & \ddots &  & \ddots & & c_{R}\\
		\vdots & \ddots & \ddots &  & \ddots & \vdots\\
		0 & \cdots & 0 & c_{-L} & \cdots & c_{0}
		
	\end{bmatrix}_{n*n}.
\end{equation}

Fast computation methods for banded Toeplitz matrices have been extensively studied in the past \cite{ bini1999effective, heinig2011fast, meek1983inverses, trench1974inversion, trench1985explicit}. Currently, the complexity of computing the determinant has been optimized to a logarithmic scale of the order \cite{CINKIR20122298,cinkir2014fast,jia2016homogeneous,kilic2008computational,lv2008note}, while the complexity for computing the complete inverse has reached a quadratic scale \cite{hadj2008fast,LV20071189,lv2008note, ng2002inversion, wang2015explicit, zhao2008inverse}. However, these algorithms do not exhibit higher efficiency over finite fields. In fact, some algorithms that utilize fast Fourier transforms and iterators fail to work properly.

Papers \cite{del2013invertible,del2015note} provided the reversibility period for $k$-diagonal Toeplitz matrices whose elements all equal to 1 on $\mathbb{F}_2$, but the analysis of the properties of special matrices is difficult to generalize to general cases. Papers discovered the periodicity of banded Toeplitz matrices over $\mathbb{F}_p$ and provided a fast computation method for invertibility using DFA and polynomials \cite{DU2022163,YANG201523}. Additionally, the periodicity of banded circulant matrices  over $\mathbb{F}_p$ has been extensively studied \cite{Akin2021,cinkir2011reversibility,martin2015reversible}. However, banded Toeplitz matrices do not possess the same advantageous properties (operational closure) as banded circulant matrices which makes the computations more complex.

In this paper, we are the first to discover the periodicity of the determinant and inverse of general banded Toeplitz matrices over $\mathbb{F}_p$. For a Toeplitz matrix with bandwidth $k$, The period $P(f)$ is related to the parameters on the band and is independent of the order, with an upper limit of $P(f) \le p^{k-1}-1$. We provide an algorithm which can compute the determinant of any order banded Toeplitz matrix within $O(k^4)$. And the inverse can be represented by three submatrices of size $P(f)$ located respectively on the diagonal, above the diagonal, and below the diagonal. Thus, the computational cost for calculating the inverse is fixed, and our algorithm can solve it within $O(k^5)+3kP(f)^2$. This means that for Toeplitz matrices with limited bandwidths, we can efficiently compute their determinants and inverses over finite fields, regardless of their sizes.

This paper consists of four sections. In Section \ref{s2}, we perform formulas for determinants and inverses of banded Toeplitz matrix. Section \ref{s3} proves the periodicity of the determinant and inverse of banded Toeplitz matrices over finite fields and provides efficient computation methods. The final section summarizes the work of the entire paper.

\section{Fast computation for banded Toeplitz structure \label{s2}}

\subsection{Determinant of banded Toeplitz matrix}
Gaussian elimination is one of the most well-known algorithms for matrices, and it is particularly effective at handling band matrices: performing Gaussian elimination on an $n$-order matrix with a bandwidth of $k=L+1+R$ requires $O(k^2n)$ operations. However, the efficiency of Gaussian elimination does not improve significantly for banded Toeplitz matrices. Therefore, we introduce a variant of Gaussian elimination tailored for banded Toeplitz matrices, allowing for batch processing of Gaussian elimination on the same row or column. This can enhance the computational efficiency of matrices with a banded Toeplitz structure, reducing the complexity from linear to logarithmic with respect to the matrix order.

\begin{theorem}
	The determinant of an $n$-order banded Toeplitz matrix $M$ with parameters $c_{-L},\cdots,c_{R}$ satisfies the following formula:
	\begin{equation}
		\label{det}
		|M|=(-1)^{R(n-L-R)}c_{R}^{n-L-R}
		\det(T^{n-L-R}A | B )
	\end{equation}
	\begin{equation}
		\nonumber
		A = 
		\begin{bmatrix}
			c_{0} & \cdots & c_{R-1}\\
			\vdots & \ddots & \vdots\\
			c_{-L} & \cdots & c_{0}\\
			\vdots & \ddots & \vdots\\
			0 & \cdots & c_{-L}
		\end{bmatrix},
		B=
		\begin{bmatrix}
			c_{R} & \cdots & 0\\
			\vdots & \ddots & \vdots \\
			c_{0} & \cdots & c_{R}\\
			\vdots & \ddots & \vdots\\
			c_{-L+1} & \cdots & c_{0}
		\end{bmatrix},
		T=
		\begin{bmatrix}
			-\frac{c_{R-1}}{c_{R}} & 1 & 0  & \cdots & 0\\
			-\frac{c_{R-2}}{c_{R}} & 0 & 1  & \cdots & 0\\
			\vdots & \vdots & \ddots & \ddots & \vdots\\
			-\frac{c_{-L+1}}{c_{R}} & 0 & \cdots & 0 & 1\\
			-\frac{c_{-L}}{c_{R}}  & 0 & \cdots & 0 & 0
		\end{bmatrix}.
	\end{equation}
\end{theorem}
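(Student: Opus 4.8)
The plan is to prove the identity by induction on the order $n$, peeling off one row and one column at a time via a single, uniform Gaussian-elimination step that is realized exactly by the companion matrix $T$. The base case is $n = L+R$: here the top $L+R$ rows are all the rows, the first $R$ columns are exactly $A$, and the next $L$ columns are exactly $B$, so $(T^{0}A \mid B) = (A \mid B) = M_{L+R}$ and the claimed prefactor $(-1)^{0}c_R^{0}$ is trivial. Throughout I assume $c_R \neq 0$ (otherwise $T$ is undefined and the band is not genuinely of width $k$); writing $M_{ij} = c_{j-i}$ with $c_m = 0$ for $m \notin [-L,R]$, one checks directly that $A$ is the top-left $(L+R)\times R$ block of $M_n$ while $B$ consists of columns $R+1,\dots,R+L$ restricted to the first $L+R$ rows.

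The reduction step is the heart of the argument. In $M_n$ the first row is $(c_0, c_1, \dots, c_R, 0, \dots, 0)$, whose unique band-leading entry $c_R$ sits at position $(1, R+1)$. Using that column as a pivot, I subtract suitable multiples of column $R+1$ from columns $1,\dots,R$ to annihilate every other entry of row $1$; columns to the right of $R+1$ already vanish in row $1$, so they are untouched. A direct computation shows that the map sending a column $v$ (supported in the top $(L+R)$-row window) to ``clear its first entry against the clean band column, then delete row $1$'' is precisely $v \mapsto Tv$, because $T$ shifts $v$ up by one position and subtracts $(v_1/c_R)(c_{R-1},\dots,c_{-L})^{\top}$. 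Crucially this formula refers only to the actual leading entry $v_1$, so it applies verbatim no matter how the column has already been modified. After the elimination, row $1$ equals $c_R\, e_{R+1}^{\top}$; Laplace-expanding along it removes row $1$ and column $R+1$, contributes the scalar $c_R(-1)^{1+(R+1)} = (-1)^{R}c_R$, and leaves an $(n-1)$-order matrix of the same shape whose first $R$ columns have been advanced by one factor of $T$ and whose remaining columns are again the clean banded-Toeplitz continuation.

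Iterating this step $n - L - R$ times drives the order down to $L+R$. Each step multiplies the determinant by $(-1)^{R}c_R$ and applies one more $T$ to the first $R$ columns, so the first $R$ columns of the terminal matrix are $T^{n-L-R}A$; the Toeplitz structure guarantees that the surviving last $L$ columns are exactly the complete-band columns recorded in $B$ (the deleted pivot column at each stage is immediately replaced, after the upward index shift, by the identical pattern of the next column). Multiplying together the $n-L-R$ scalar factors gives $(-1)^{R(n-L-R)}c_R^{n-L-R}$, and the terminal determinant is $\det(T^{n-L-R}A \mid B)$, which is the assertion.

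The main obstacle is the bookkeeping that makes ``apply one more $T$'' rigorous across all steps simultaneously. Two points need care. First, I must verify that every modified column stays supported within a fixed $(L+R)$-row window, so that identifying it with a vector in $\mathbb{F}_p^{L+R}$ and with the action of the fixed matrix $T$ is legitimate at every stage rather than only at the first. Second, I must track the re-indexing of rows and columns after each Laplace expansion to confirm that the block to the right of the pivot genuinely remains a clean banded-Toeplitz matrix one size smaller, and that after exactly $n-L-R$ deletions the right-hand $L$ columns coincide with $B$ rather than with some truncated boundary columns; the degenerate interaction near the bottom-right corner, where the band is incomplete, is where this identification is most delicate and must be checked against the definitions of $A$ and $B$.
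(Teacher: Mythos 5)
Your proposal is correct and is essentially the paper's own argument: the paper likewise uses the full-band column in position $R+1$ as a pivot to clear the columns to its left, encodes that elimination as left-multiplication of the left block by $T$, extracts $(-1)^{R}c_{R}$ from the resulting singleton row via Laplace expansion, and repeats this $n-L-R$ times to arrive at $\det(T^{n-L-R}A \mid B)$. Your only difference is cosmetic — packaging the iteration as induction on $n$ and spelling out the support and re-indexing bookkeeping that the paper leaves implicit.
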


\begin{proof}
	In matrices with a Toeplitz structure, we can easily identify the following local construct: a column containing $c_{-L},\cdots,c_{R}$, with the remaining elements being zero, and the last row of the $(L+R+1)*d$ ($d \in \mathbb{Z}_+$) submatrix to its left being all zeros. Now, we use the column containing 
	$c_{-L},\cdots,c_{R}$ to perform Gaussian elimination on the left side, as shown in Eq. \eqref{gauss}.
	
	\begin{equation}
		\label{gauss}
		\begin{bmatrix}
			y_{1,1} & \cdots & y_{1,d} & c_{R}\\
			y_{2,1} & \cdots & y_{2,d} & c_{R-1}\\
			\vdots & \ddots\ & \vdots & \vdots\\
			y_{L+R,1} & \cdots & y_{L+R,d} & c_{-L+1}\\
			0 & \cdots & 0 & c_{-L}\\
		\end{bmatrix}
		\Longrightarrow 
		\begin{bmatrix}
			0 & \cdots & 0 & c_{R}\\
			y_{2,1}-\frac{y_{1,1}c_{R-1}}{c_{R}}  & \cdots & y_{2,d}-\frac{y_{1,d}c_{R-1}}{c_{R}} & c_{R-1}\\
			\vdots & \ddots\ & \vdots & \vdots\\
			y_{L+R,1}-\frac{y_{1,1}c_{-L+1}}{c_{R}} & \cdots & y_{L+R,d}-\frac{y_{1,d}c_{-L+1}}{c_{R}} & c_{-L+1}\\
			-\frac{y_{1,1}c_{-L}}{c_{R}} & \cdots & -\frac{y_{1,d}c_{-L}}{c_{R}} & c_{-L}\\
		\end{bmatrix}
	\end{equation}
	
	At this point, we observe that the non-zero part of the matrix on the left has shifted down by one row, and the two submatrices before and after the transformation satisfy the following equation:
	
	\begin{equation}
		\label{matrixGauss}
		\begin{bmatrix}
			y_{2,1}-\frac{y_{1,1}c_{R-1}}{c_{R}}  & \cdots & y_{2,d}-\frac{y_{1,d}c_{R-1}}{c_{R}}\\
			\vdots & \ddots\ & \vdots\\
			y_{L+R,1}-\frac{y_{1,1}c_{-L+1}}{c_{R}} & \cdots & y_{L+R,d}-\frac{y_{1,d}c_{-L+1}}{c_{R}}\\
			-\frac{y_{1,1}c_{-L}}{c_{R}} & \cdots & -\frac{y_{1,d}c_{-L}}{c_{R}}\\
		\end{bmatrix}
		= 
		\begin{bmatrix}
			-\frac{c_{R-1}}{c_{R}} & 1 & 0  & \cdots & 0\\
			-\frac{c_{R-2}}{c_{R}} & 0 & 1  & \cdots & 0\\
			\vdots & \vdots & \ddots & \ddots & \vdots\\
			-\frac{c_{-L+1}}{c_{R}} & 0 & \cdots & 0 & 1\\
			-\frac{c_{-L}}{c_{R}}  & 0 & \cdots & 0 & 0
		\end{bmatrix}
		\begin{bmatrix}
			y_{1,1} & \cdots & y_{1,d}\\
			y_{2,1} & \cdots & y_{2,d}\\
			\vdots & \ddots\ & \vdots\\
			y_{l+r,1} & \cdots & y_{l+R,d}\\
		\end{bmatrix}
	\end{equation}

	The above equation indicates that a series of Gaussian eliminations on identical columns can be represented as the multiplication of a matrix power from the left. This is useful for computing matrices with a Toeplitz structure. For more details please read Cinkir's \cite{cinkir2014fast}.

	We first identify a column with the complete set of parameters. In a standard Toeplitz matrix, this is the $(R+1)$-th column. Then, perform Gaussian elimination on the left part of the matrix using this column. The row containing $c_{R}$ will have only one non-zero element, which we can extract $(-1)^{R}c_{R}$.Additionally, from Eq. \eqref{det}, this Gaussian elimination is equivalent to left-multiplying the matrix $A$ in Eq. \eqref{det} by the matrix $T$. The Gaussian elimination can be performed at most $n-L-R$ times, resulting in $T^{n-L-R}A$. 
\end{proof}

\subsection{Inverse of banded Toeplitz matrix}

In the previous subsection, we established that Gaussian elimination is equivalent to left-multiplying by a shifting matrix. This method is also useful in computing the inverse of banded Toeplitz matrices. Since $M^{-1} = \frac{M^*}{|M|}$, and the method described works for all matrices with a banded Toeplitz structure, we can quickly compute a portion of the algebraic cofactors, thereby calculating elements of the inverse matrix. We first compute the area in the upper-left $R*L$ region of the matrix $M^{-1}$. 

\begin{theorem}
	Let $x_{i,j}$ represents the element in the $i$-th row and $j$-th column of $M^{-1}$, for $1 \le i \le L, 1 \le j \le R$, $x_{i,j}$ satisfies the following formula:
	
	\begin{equation}
		\label{invformula}
		x_{i,j}=\frac{(-1)^{i+j+r(n-L-2R)+(L-1)(n-L)}c_{R}^{n-L-2R}}{|M|}	
		\begin{vmatrix}
			T^{n-L-2R}A_{i} & B\\
			C_{j,i} & 0
		\end{vmatrix},
	\end{equation}
	\begin{equation}
		\setlength{\arraycolsep}{3.5pt}
		\nonumber
		A=
		\begin{bmatrix}
			c_{-L} & c_{-L+1} & \cdots & c_{R-1}\\
			0 & c_{-L} & \cdots & c_{R-2}\\
			\vdots & \vdots & \ddots & \vdots\\
			0 & 0 & \cdots & c_{-L}
		\end{bmatrix},
		B=
		\begin{bmatrix}
			c_{R} & \cdots & 0\\
			\vdots & \ddots & \vdots \\
			c_{0} & \cdots & c_{R}\\
			\vdots & \ddots & \vdots\\
			c_{-L+1} & \cdots & c_{0}
		\end{bmatrix},
		C=
		\begin{bmatrix}
			c_{0} & \cdots & c_{r} & \cdots & 0\\
			\vdots & \ddots & \vdots & \ddots & \vdots\\
			c_{-L+1} & \cdots & c_{0} & \cdots & c_{R}\\
		\end{bmatrix},
		T=
		\begin{bmatrix}
			-\frac{c_{R-1}}{c_{R}} & 1 & 0  & \cdots & 0\\
			-\frac{c_{R-2}}{c_{R}} & 0 & 1  & \cdots & 0\\
			\vdots & \vdots & \ddots & \ddots & \vdots\\
			-\frac{c_{-L+1}}{c_{R}} & 0 & \cdots & 0 & 1\\
			-\frac{c_{-L}}{c_{R}}  & 0 & \cdots & 0 & 0
		\end{bmatrix}.
	\end{equation}
	$A_i$ represents matrix $A$ with the $i$-th column removed, and $C_{j,i}$ represents the matrix $C$ with both the $j$-th row and $i$-th column removed.
\end{theorem}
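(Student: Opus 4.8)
The plan is to reduce the computation of the inverse entry $x_{i,j}$ to a single cofactor of $M$ and then to rerun, on the resulting minor, the ``Gaussian-elimination-as-left-multiplication-by-$T$'' mechanism already established in the determinant theorem (Eq.~\eqref{matrixGauss}). Using the adjugate identity $M^{-1}=M^{*}/|M|$ recorded above, the target entry is a normalized cofactor,
\begin{equation}
	\nonumber
	x_{i,j}=\frac{(-1)^{i+j}}{|M|}\,\det\!\big(M^{(j,i)}\big),
\end{equation}
where $M^{(j,i)}$ denotes the $(n-1)\times(n-1)$ matrix obtained from $M$ by deleting its $j$-th row and $i$-th column. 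Because $j\le R$ and $i\le L$, both deletions fall inside the bounded top-left corner, so $M^{(j,i)}$ is still exactly banded Toeplitz everywhere outside that corner. The whole statement thus reduces to evaluating $\det(M^{(j,i)})$ and matching the resulting sign and power of $c_R$ against Eq.~\eqref{invformula}.

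First I would apply the shifting-elimination procedure of the previous proof to the Toeplitz bulk of $M^{(j,i)}$: locate a column carrying the full band $c_{-L},\dots,c_R$, use it to clear the entries to its left, and invoke Eq.~\eqref{matrixGauss} so that each sweep shifts the active band down by one row, extracts a factor $(-1)^{R}c_R$, and acts on the boundary data by left-multiplication by $T$. The new feature is that the missing row and column shorten the clean Toeplitz region, so that a careful count leaves exactly $n-L-2R$ admissible sweeps instead of the $n-L-R$ of the determinant theorem; this is precisely the bookkeeping that produces the factor $c_R^{\,n-L-2R}$, the power $T^{\,n-L-2R}$, and the contribution $(-1)^{R(n-L-2R)}$ to the overall sign. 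After all sweeps the determinant collapses to a boundary determinant of order independent of $n$.

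It then remains to identify the three surviving blocks. The deleted column $i$ propagates through the reduction as the deletion of the $i$-th column of the initial-condition matrix $A$, yielding the block $T^{\,n-L-2R}A_i$; the lower boundary rows reassemble into the unchanged matrix $B$; and the top boundary rows, now also missing their $j$-th row and $i$-th column, reassemble into $C_{j,i}$, paired with the zero block recording that those top rows have no overlap with the right-boundary columns of $B$. Bringing the $(n-1)$ rows of the minor into the block layout of Eq.~\eqref{invformula} requires moving the $L-1$ boundary rows past the remaining $n-L$ rows, a block permutation contributing the sign $(-1)^{(L-1)(n-L)}$. Combined with the cofactor sign $(-1)^{i+j}$ and the elimination sign $(-1)^{R(n-L-2R)}$, this should reconstitute the full exponent in Eq.~\eqref{invformula}.

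I expect the main obstacle to be exactly this final bookkeeping rather than the structural reduction, which is a direct rerun of the determinant theorem. The delicate points are: verifying that the deleted row and column survive the entire shifting reduction to become precisely $A_i$ and $C_{j,i}$ (and not some index-shifted variants), pinning down the exact sweep count $n-L-2R$ near the disrupted corner, and assembling every sign contribution into the single claimed exponent. Keeping the boundary indices and the three sign sources aligned once a row and a column have been removed from the corner is where the proof will demand the most care.
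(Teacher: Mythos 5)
Your proposal follows essentially the same route as the paper's own proof: both express $x_{i,j}$ as the transposed cofactor $cof_{j,i}/|M|$, rerun the shifting Gaussian elimination of the determinant theorem (left-multiplication by $T$, extracting $(-1)^{R}c_{R}$ per sweep) exactly $n-L-2R$ times on the minor, account for moving the $L-1$ boundary rows past the $n-L$ Toeplitz rows with the sign $(-1)^{(L-1)(n-L)}$, and identify the surviving blocks $T^{n-L-2R}A_{i}$, $B$, $C_{j,i}$, $0$. The bookkeeping concerns you flag are precisely the details the paper's proof also leaves implicit, so there is no substantive difference between the two arguments.
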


\begin{proof}
	
	The algebraic cofactor $cof_{i,j}$ of the element in the $i$-th row and $j$-th column is given by the following formula:
	\begin{equation}
		\label{cof}
		cof_{j,i}=(-1)^{i+j+(L-1)*(n-L)}
		\left|\begin{array}{ccc:ccccc}
			& & & c_{R} &  & \ddots &  & \\
			& A_{i} & & \vdots & \ddots &  & 0 & \\
			& & & c_{0} &  & c_{R} &  & \ddots\\
			& & & \vdots & \ddots & \vdots & \ddots & \\
			& & & c_{-L} &  & c_{0} &  & c_{R}\\
			& 0 & &  & \ddots & \vdots & \ddots & \vdots\\
			& & & 0 &  & c_{-L} & \cdots & c_{0}\\
			\hdashline
			& & & & & & & \\
			& C_{j,i} & & & &0& & \\
			& & & & & & & \\
		\end{array}\right|.
	\end{equation}
	To facilitate understanding, we move the top $L-1$ rows (excluding one row due to the deletion of the algebraic cofactor) of the cofactor matrix to the bottom. Then, we can find the complete $L+1+R$ parameters in the $R$-th column and perform Gaussian elimination on the left part, which is equivalently represented by left-multiplying by the matrix $T$ in Eq. \eqref{invformula}. The Gaussian elimination with complete parameters can be executed at most $n-L-2R$ times. Additionally, it is important to note that the position of the inverse element corresponding to the cofactor is symmetric with respect to the diagonal, so in Eq. \eqref{cof}, it is $cof_{j,i}$ rather than $cof_{i,j}$.
\end{proof}

In the above, we have introduced the method for calculating some of the elements of the inverse matrix. However, calculating the cofactors for all elements still incurs significant computational cost. Therefore, we use the method of solving linear equations to compute the entire inverse matrix.

\begin{theorem}
	Let $x_{i,j}$ represents the element in the $i$-th row and $j$-th column of $M^{-1}$ ($x_{i,j}=0$ if $j \le 0$). Calculating by rows, for $j > L$, $x_{i,j}$ satisfies the following formula:
	
	\begin{equation}
		\label{relationRow}
		x_{i,j} =
		\begin{cases}
			-\frac{1}{c_{-L}}\sum_{t=-L+1}^{R}c_{t}x_{i,j-t+L}    & (i \neq j - L) \\
			\frac{1}{c_{-L}}(1-\sum_{t=-L+1}^{R}c_{t}x_{i,j-t+L}) & (i = j- L)
		\end{cases}
	\end{equation}
\end{theorem}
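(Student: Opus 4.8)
The plan is to derive the recurrence directly from the defining identity $M^{-1}M = I$, reading it off one entry at a time. Writing $X = M^{-1}$ with entries $x_{i,j}$ and recording the Toeplitz structure of Eq.~\eqref{M} as $M_{a,b} = c_{b-a}$ (with the convention $c_{s}=0$ for $s<-L$ or $s>R$), the $(i,c)$ entry of $XM$ is
\begin{equation}
\nonumber
(XM)_{i,c} = \sum_{b} x_{i,b}\, M_{b,c} = \sum_{b} x_{i,b}\, c_{c-b} = \delta_{i,c}.
\end{equation}
Because only entries of the single row $i$ of $X$ appear, this is exactly a relation \emph{by rows}, which is the form the theorem asks for.

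First I would use the band to truncate the sum: the coefficient $c_{c-b}$ vanishes unless $c-R \le b \le c+L$, so only $L+R+1$ consecutive columns of row $i$ are coupled. The term with the largest column index $b=c+L$ carries the coefficient $c_{c-b}=c_{-L}$, which is nonzero (this is exactly the hypothesis that makes division by $c_{-L}$ legitimate and reflects genuine bandwidth on the lower side). Solving the truncated equation for this leading unknown gives
\begin{equation}
\nonumber
x_{i,c+L} = \frac{1}{c_{-L}}\Big(\delta_{i,c} - \sum_{b=c-R}^{\,c+L-1} c_{c-b}\, x_{i,b}\Big).
\end{equation}

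Next I would substitute $j=c+L$ (so $c=j-L$) and reindex the sum by $t=c-b$, i.e. $b=j-L-t$, which sends the range $b\in[c-R,\,c+L-1]$ to $t\in[-L+1,\,R]$ and turns the coefficient into $c_{t}$. This produces the stated recurrence, with the Kronecker delta $\delta_{i,c}=\delta_{i,\,j-L}$ splitting into the two displayed cases according to whether $i\neq j-L$ or $i=j-L$. The hypothesis $j>L$ is precisely the condition $c=j-L\ge 1$ guaranteeing that column $c$ is a genuine column of $M$, so that the identity $(XM)_{i,c}=\delta_{i,c}$ is available; the entries $x_{i,b}$ with $b\le 0$ that may appear at the left end of the window are supplied by the stated convention $x_{i,j}=0$ for $j\le 0$.

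The argument is essentially bookkeeping, so the only real care is at the boundary. The point to check is that the isolated unknown $x_{i,c+L}$ indexes a real column, i.e. $c+L=j\le n$, and that the window $[c-R,\,c+L]$ meets the matrix edges correctly: truncation on the right never occurs (the largest index is $j\le n$), while truncation on the left is handled uniformly by the zero convention. Verifying that this convention reproduces the true equations in the first few columns $j=L+1,\dots$ — rather than introducing spurious terms — is the one step I would write out explicitly; everything else reduces to the single linear solve above.
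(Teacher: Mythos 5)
Your proposal is correct and takes essentially the same route as the paper: the paper likewise fixes row $i$ of $M^{-1}$, writes $X_iM=E_i$ to obtain the band-limited linear system (its Eq.~\eqref{eqRow}), and solves for the term carrying the coefficient $c_{-L}$, exactly your single linear solve. One remark: your derivation yields the index $x_{i,\,j-L-t}$, which agrees with the paper's Eq.~\eqref{eqRow} and with the left-to-right recursion implied by the convention $x_{i,j}=0$ for $j\le 0$; the index $x_{i,\,j-t+L}$ printed in Eq.~\eqref{relationRow} is a typo in the paper, so your formula is the intended (and correct) one.
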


\begin{proof}
	
	Select $i$-th row $X_i$ ($1\le i \le n$) of the $M^{-1}$. We have $X_iM=E_{i}$, where $E_i$ is the $i$-th row of the identity matrix. Thus, we can set up the following system of equations:

\begin{equation}
	\label{eqRow}
	\begin{cases}
		c_{R}x_{i,j-L-R}+ \cdots +c_{-L}x_{i,j} &= 0 \ \ \ \ \text{if}\ i \neq j - L \\
		c_{R}x_{i,j-L-R}+ \cdots +c_{-L}x_{i,j} &= 1 \ \ \ \ \text{if}\ i = j - L \\
	\end{cases}.
\end{equation}
Based on the above system of equations, we can obtain Eq. \eqref{relationRow}. 
\end{proof}

\begin{theorem}
	Let $x_{i,j}$ represents the element in the $i$-th row and $j$-th column of $M^{-1}$ ($x_{i,j}=0$ if $i \le 0$). Calculating by columns, for $i > R$, $x_{i,j}$ satisfies the following formula:
	\begin{equation}
		\label{relationCol}
		x_{i,j}=
		\begin{cases}
			-\frac{1}{c_{R}}\sum_{t=-L}^{R-1} c_{t}x_{i+t-r,j}    & (i \neq j + R)\\
			\frac{1}{c_{R}}(1-\sum_{t=-L}^{R-1} c_{t}x_{i+t-r,j}) & (i = j + R)
		\end{cases}
	\end{equation}
\end{theorem}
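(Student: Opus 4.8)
The final statement (the column-based recurrence, Eq.~\eqref{relationCol}) is the exact transpose-analogue of the row-based recurrence just proved, so my plan is to mirror that proof but work with columns of $M^{-1}$ against rows of $M$. The key observation is that $M^{-1}M = E$ as well as $MM^{-1}=E$; the row version exploited $X_i M = E_i$ (rows of the inverse), whereas here I would exploit $M Y_j = E_j^{\top}$, i.e.\ the defining relation read column-wise. So first I would fix a column index $j$ ($1 \le j \le n$) and write $Y_j$ for the $j$-th column of $M^{-1}$, noting that $M Y_j$ equals the $j$-th standard basis (column) vector.

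Next I would expand the $i$-th entry of the product $M Y_j$. Because $M$ is banded Toeplitz with band parameters $c_{-L},\dots,c_{R}$, the $i$-th row of $M$ has nonzero entries $c_{t}$ sitting in columns $i+t$ for $t$ ranging over $-R,\dots,L$ in the natural Toeplitz convention, so the inner product picks up a fixed-width window of the unknowns $x_{i+t-R,\,j}$. This yields, for each $i$, the scalar equation
\begin{equation}
	\nonumber
	c_{-L}\,x_{i-R-L,\,j} + \cdots + c_{R}\,x_{i,\,j} =
	\begin{cases}
		0 & \text{if } i \neq j + R,\\
		1 & \text{if } i = j + R,
	\end{cases}
\end{equation}
which is precisely the column analogue of Eq.~\eqref{eqRow}. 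The right-hand side is $1$ exactly when the row index $i$ of $MY_j$ matches the position of the $1$ in the $j$-th basis vector; tracking which entry that is (it lands at $i = j+R$ because the leading coefficient $c_R$ multiplies $x_{i,j}$ and the band is offset by $R$) is the bookkeeping step that fixes the case split.

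Finally I would solve this relation for the ``newest'' unknown $x_{i,j}$ by isolating the $c_{R}$ term: moving all other summands to the other side and dividing by $c_{R}$ gives exactly the two cases of Eq.~\eqref{relationCol}, with the summation index $t$ running from $-L$ to $R-1$ over the already-known entries $x_{i+t-R,\,j}$ and the convention $x_{i,j}=0$ for $i\le 0$ supplying the boundary values. The main obstacle is not conceptual but indexing discipline: I must be careful that the Toeplitz shift is read consistently down a column rather than along a row, that the leading coefficient being solved for is $c_{R}$ (not $c_{-L}$ as in the row case), and that the $1$ in the identity appears at $i=j+R$ rather than $i=j-L$. Getting these offsets right—so that the recurrence genuinely proceeds in increasing $i$ and uses only previously computed values—is where the argument could go wrong, so I would verify the base cases $i \le R$ against the explicit cofactor formula of Eq.~\eqref{invformula} as a consistency check before asserting the general recurrence.
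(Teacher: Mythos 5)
Your proposal is correct and takes essentially the same route as the paper: the paper likewise reads $MM^{-1}=E$ column-wise to get the system $c_{-L}x_{i-L-R,j}+\cdots+c_{R}x_{i,j}=0$ (or $1$ exactly when $i=j+R$), its Eq.~\eqref{eqCol}, and then isolates the $c_R$ term to obtain Eq.~\eqref{relationCol}. One cosmetic slip to fix: in row $i$ of $M$ the band coefficients $c_t$ sit in columns $i+t$ for $t=-L,\dots,R$ (not $-R,\dots,L$); your displayed window equation and case split are nevertheless the correct ones.
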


\begin{proof}
	Similarly, we can derive Eq. \eqref{relationCol} by solving the following system of equations for the columns.
\end{proof}
\begin{equation}
	\label{eqCol}
	\begin{cases}
		c_{-L}x_{i-L-R,j}+ \cdots +c_{R}x_{i,j} &= 0 \ \ \ \ \text{if}\ i \neq j + R \\
		c_{-L}x_{i-L-R,j}+ \cdots +c_{R}x_{i,j} &= 1 \ \ \ \ \text{if}\ i = j + R \\
	\end{cases}.
\end{equation}

\begin{corollary}
	The inverse of $n$th-order Toeplitz matrix with bandwidth $k=L+1+R$ can be calculated in $O(k^3 \log n+k^5)+kn^2$.
\end{corollary}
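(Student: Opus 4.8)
The plan is to assemble the full inverse in three phases, each drawing on one of the results already established, and then to add up the costs. The recurrences of Eqs.~\eqref{relationRow} and \eqref{relationCol} are the engine: each expresses a single entry $x_{i,j}$ of $M^{-1}$ as a fixed linear combination of $L+R$ neighbouring entries in the same row (resp. column), with coefficients built from $c_t/c_{-L}$ (resp. $c_t/c_{R}$). Since $c_{-L}\neq 0$ and $c_{R}\neq 0$ for a matrix of full bandwidth $k=L+1+R$, each such step is well defined and costs $O(k)$ arithmetic operations. Hence, once a suitable band of seed entries is known, every one of the remaining $O(n^2)$ entries can be produced at cost $O(k)$ apiece, which contributes the $kn^2$ term.

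The seed values are supplied by the determinant and cofactor formulas. First I would compute $|M|$ from Eq.~\eqref{det}: forming the power $T^{\,n-L-R}$ of the $(L+R)\times(L+R)$ matrix $T$ by repeated squaring uses $O(\log n)$ matrix multiplications of $O(k^3)$ each, and the final $O(k)$-sized determinant of $(T^{\,n-L-R}A\mid B)$ adds only $O(k^3)$, for a phase total of $O(k^3\log n)$. Next I would compute the corner entries $x_{i,j}$ with $1\le i\le L$, $1\le j\le R$ from Eq.~\eqref{invformula}. There are $LR=O(k^2)$ of them; each is a single determinant of an $O(k)\times O(k)$ block matrix built from the already-available power $T^{\,n-L-2R}$ (read off from the same squaring chain) together with the fixed blocks $A_i$, $B$, and $C_{j,i}$, so each costs $O(k^3)$ and the phase costs $O(k^2)\cdot O(k^3)=O(k^5)$. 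Summing the three phases gives $O(k^3\log n+k^5)+kn^2$, as claimed.

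The step I expect to require the most care is verifying that the computed corner block, together with the zero-padding convention ($x_{i,j}=0$ for out-of-range indices), constitutes a complete and consistent set of initial conditions for the propagation. Because the inverse of a banded matrix satisfies the recurrence only in its interior and is pinned down by data at both boundaries, I must check that Eqs.~\eqref{relationRow} and \eqref{relationCol} are applied in directions whose seed entries are already known --- using the row recurrence to sweep outward from the top-left corner and the column recurrence to cover the complementary region --- so that no entry is ever computed before its $L+R$ predecessors are available, and that the two sweeps agree on any overlap. Granting that the sweep order is consistent in this sense, the correctness of each phase is immediate from the cited theorems, and the complexity bound follows by the additive tally above.
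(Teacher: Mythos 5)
Your proposal matches the paper's own proof essentially step for step: the same three phases (powers of $T$ and the determinant via Eq.~\eqref{det}, the $O(k^2)$ corner seed entries via Eq.~\eqref{invformula} at $O(k^3)$ each, and the row/column recurrences of Eqs.~\eqref{relationRow} and \eqref{relationCol} for the remaining $O(n^2)$ entries), with the same cost tally $O(k^3\log n + k^5)+kn^2$. The only differences are cosmetic: you seed with the $L\times R$ block as in the statement of Theorem~2 while the paper's algorithm writes the $R\times L$ block (an index inconsistency present in the paper itself), and your closing remark about sweep-order consistency makes explicit a point the paper leaves implicit.
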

\begin{proof}
	The computation of the inverse of a banded Toeplitz matrix is divided into four steps:
	
	\begin{enumerate}
		\item Compute $T^{n-L-2R}$ and $c_R^{n-L-2R}$, which is efficiently done using simple method of bisection, with a complexity of $O(k^3\log n)$.
		\item Calculate $|M|$ based on Eq. \eqref{det}, with a complexity of $O(k^3)$.
		\item For $1 \le i \le R,1\le j \le L$, calculate $x_{i,j}$ based on Eq. \eqref{invformula}, with a complexity of $O(k^5)$ and parallel computing available.
		\item For $1 \le i \le R$, $L <j \le n$, calculate $x_{i,j}$ based on Eq. \eqref{relationRow}. For $R < i \le n, 1 \le j \le n$, calculate $x_{i,j}$ based on Eq. \eqref{relationCol}. The time cost of this part is $kn^2$ and parallel computing is available.
	\end{enumerate}

	The pseudocode of this algorithm is shown in Algorithm \ref{a1}.
	\begin{algorithm}[h]
		\SetAlgoLined
		\label{a1}
		Input a banded Toeplitz matrix with parameters $c_{-L},\cdots,c_{R}$ and order $n$\;
		calculate $T^{n-L-2R}$ and $c_k^{n-L-2R}$ \tcp*{\textbf{step1}}
		calculate $|M|$	\tcp*{\textbf{step2}}
		\For{$i = 1; i \leq R; i++$}{
			\For{$j = 1;  i \leq L; j++$}{
				calculate $x_{i,j}$ according to Eq. \eqref{invformula} \tcp*{\textbf{step3}}
			}
		}
		\For{$i = 1; i \leq R; i++$}{
			\For{$j = L+1; j \leq n; j++$}{
				calculate $x_{i,j}$ according to left of Eq. \eqref{relationRow} \tcp*{\textbf{step4}}
			}	
		}
		\For{$i = R+1; i \leq n; i++$}{
			\For{$j = 1;j \leq n; j++$}{
				calculate $x_{i,j}$ according to right of Eq. \eqref{relationCol}\;
			}
			
		}
		construct and return $M^{-1}$
		\caption{Inverse of a $n$th-order banded Toeplitz matrix $M$.}
	\end{algorithm}
\end{proof}

\section{Periodic determinant and inverse of banded Toeplitz matrix over $\mathbb{F}_p$ \label{s3}}

In the previous section, we found that the inverse of a $k$-diagonal Toeplitz matrix can be represented by multiple recurrence relations. Furthermore, over finite fields, these recurrence relations bring about a very favorable property: periodicity, which will greatly simplify calculations over finite fields. Next, we will introduce a well-known concept in finite fields: the linear feedback shift register (LFSR).

\begin{definition}
	LFSR consists of a register and a feedback polynomial. The register used in this paper contains $L+R$ storage units, and the feedback polynomial is usually represented in polynomial form as 
	\begin{equation}
		\label{poly}
		f(x)=c_{r}x^{L+R}+\cdots+c_{-L+1}x+c_{-L} \ \ \  (c_{-L},c_{R} \ne 0).
	\end{equation}

	The working process of an LFSR can be divided into the following steps:
	\begin{description}
		\item[Step1:] Set the register to the initial state (seed).
		\item[Step2:] Shift the numbers in the register one position to the right.
		\item[Step3:] Calculate the new input number based on the feedback polynomial and place it in the leftmost position of the register.
	\end{description}
	Running the LFSR will result in the following recurrence relation:
	\begin{equation}
		\label{recur}
		x_i = -(x_{i-1}c_{R-1}+x_{i-2}c_{R-2}+\cdots+x_{i-L-R}c_{-L})/c_{R}
	\end{equation}

\end{definition}

\begin{proposition}
	\label{period}
	\cite{berlekamp2015algebraic} The period of the LFSR is equal to the period of its corresponding feedback polynomial $f$. The period $P(f)$ of polynomial is $P(f) = \min \{q \in \mathbb{Z}^+\ : \  f(x)\ |\ (x^{q}-1)\}$.
\end{proposition}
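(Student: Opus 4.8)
The plan is to translate the LFSR dynamics into the language of its companion matrix $T$ from Eq.~\eqref{det} and then identify the period with the multiplicative order of $x$ in the residue ring $\mathbb{F}_p[x]/(f(x))$. The proof reduces to two standard algebraic facts: that a companion matrix has minimal polynomial equal to its characteristic polynomial, and that a matrix power equals the identity exactly when its minimal polynomial divides the corresponding factor $x^q-1$.

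First I would encode the register state as a vector and observe that one cycle of Steps~2--3 is a fixed $\mathbb{F}_p$-linear map; substituting $x_i=\lambda^i$ into the recurrence \eqref{recur} shows its characteristic polynomial is $f(\lambda)/c_R$, so the transition matrix is a companion form of the monic normalization $\tilde f = f/c_R$, sharing the characteristic polynomial and order of $T$. Because $T$ is a companion matrix, its minimal polynomial coincides with $\tilde f$. Next I would establish pure periodicity: since $c_{-L}\neq 0$ we have $f(0)\neq 0$ and hence $\det T = (-1)^{L+R}c_{-L}/c_R \neq 0$, so $T\in GL_{L+R}(\mathbb{F}_p)$. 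As this group is finite, every orbit of the state map is a cycle with no transient, and the period of the LFSR---the least $q$ for which all states satisfy $s_{i+q}=s_i$---is exactly $\mathrm{ord}(T)=\min\{q\in\mathbb{Z}^+ : T^q=I\}$.

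Then I would compute this order. Since the minimal polynomial of $T$ is $\tilde f$, the identity $T^q=I$ holds iff $\tilde f(x)\mid x^q-1$; and because $\tilde f$ and $f$ differ only by the unit $c_R\in\mathbb{F}_p^{\times}$, this is equivalent to $f(x)\mid x^q-1$. Taking the least such $q$ yields $\mathrm{ord}(T)=\min\{q : f(x)\mid x^q-1\}=P(f)$, which is the claim. The same conclusion can be read off the ring picture: $T$ represents multiplication by $x$ on $\mathbb{F}_p[x]/(f)$, in which $x$ is a unit precisely because $f(0)\neq 0$, and the order of $x$ in the group of units is by definition the least $q$ with $x^q\equiv 1 \pmod{f}$, i.e. $P(f)$.

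The step I expect to be the main obstacle is not the divisibility computation but the careful bridge between the dynamical notion ``period of the LFSR'' and the algebraic order $\mathrm{ord}(T)$. This requires both the invertibility of $T$ (so that the state sequence is purely periodic and the device period is well defined as a single order rather than an order carrying a pre-period tail) and the companion-matrix fact that the minimal polynomial is exactly $\tilde f$ (so that $T^q=I$ is genuinely equivalent to the divisibility $\tilde f \mid x^q-1$, not merely implied by it). Once these are in place, the characterization of $P(f)$ follows at once. I would also note, for the application in Section~\ref{s2}, that this order governs the periodicity of the matrix powers $T^{\,n}$ appearing in the determinant and inverse formulas.
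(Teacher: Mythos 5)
Your proof is correct, but note that the paper does not prove this proposition at all: it is quoted verbatim from Berlekamp's \emph{Algebraic Coding Theory} (the citation \cite{berlekamp2015algebraic}), so there is no internal proof to compare against. Your companion-matrix argument is sound: the transition matrix of the recurrence \eqref{recur} is the matrix $T$, a companion form of the monic normalization $\tilde f = f/c_R$ (indeed $T = JC^{\mathsf T}J$ for the standard companion matrix $C$ of $\tilde f$ and the reversal permutation $J$), so its minimal polynomial equals $\tilde f$; invertibility of $T$ follows from $\det T = (-1)^{L+R}c_{-L}/c_R \neq 0$, giving pure periodicity; and then $T^q = I \iff \tilde f \mid x^q - 1 \iff f \mid x^q - 1$, so the common period of all state sequences is $\min\{q : f \mid x^q-1\}$. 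The interesting structural point is that your argument runs in the \emph{opposite direction} to the paper's: the paper takes Proposition \ref{period} as a black box and uses it in Theorem \ref{periodT} to conclude $T^{P(f)} = E$ (and minimality) from the periodicity of the scalar sequences $x_i$, whereas you establish $\mathrm{ord}(T) = P(f)$ directly by linear algebra and obtain the LFSR statement as a corollary. Consequently your proof subsumes Theorem \ref{periodT} entirely, and it makes explicit two hypotheses the paper leaves implicit, namely that $c_{-L} \neq 0$ is what guarantees both the invertibility of $T$ (hence absence of a pre-periodic tail) and the well-definedness of $P(f)$ via $f(0) \neq 0$. The one fact you should be careful to justify if this were written out in full is the equivalence of ``period of the LFSR'' with the least $q$ such that $T^q s = s$ for \emph{all} seeds $s$; since the minimal polynomial of $T$ equals its characteristic polynomial, a cyclic vector exists and the maximum seed-period, the lcm of seed-periods, and $\mathrm{ord}(T)$ all coincide, so any of the usual definitions of the device period gives the same answer.
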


\begin{proposition}
	\label{scope}
	\cite{berlekamp2015algebraic} For $f(x)=c_{r}x^{L+R}+\cdots+c_{-L+1}x+c_{-L}$ $(c_{-L},c_{R} \ne 0)$ over $\mathbb{F}_p$, $L+R \le  P(f) \le p^{L+R}-1$.
\end{proposition}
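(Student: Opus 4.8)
The plan is to translate the divisibility condition defining $P(f)$ into a statement about the multiplicative order of $x$ in the quotient ring $R = \mathbb{F}_p[x]/(f(x))$, and then read off both bounds from the size and degree of this ring. First I would record two structural facts about $f$: since $c_{R} \neq 0$ its degree is exactly $L+R$, and since the constant term $c_{-L} \neq 0$ we have $f(0) \neq 0$, so $x \nmid f(x)$ and hence $\gcd(x, f(x)) = 1$. Consequently the residue $\bar{x}$ of $x$ in $R$ is a unit. I would also note that $R$ is an $\mathbb{F}_p$-vector space of dimension $L+R$, so $|R| = p^{L+R}$.

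For the upper bound I would argue as follows. The condition $f(x) \mid x^{q} - 1$ is equivalent to $\bar{x}^{\,q} = 1$ in $R$, so $P(f)$ is precisely the multiplicative order $d$ of the unit $\bar{x}$. Existence of such a $q$ is automatic: since $R$ is finite the powers of $\bar{x}$ must eventually repeat, and cancelling the unit $\bar{x}$ yields $\bar{x}^{\,d} = 1$ for some $d > 0$. The powers $\bar{x}^{\,0}, \bar{x}^{\,1}, \ldots, \bar{x}^{\,d-1}$ are then $d$ distinct elements of $R$, and each is a unit, hence nonzero. Since $R$ has at most $p^{L+R} - 1$ nonzero elements, I conclude $P(f) = d \le p^{L+R} - 1$.

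For the lower bound I would use a degree comparison. Suppose $1 \le q < L+R$. Then $x^{q} - 1$ is a nonzero polynomial of degree $q < L+R = \deg f$, so it cannot be divisible by $f$. Therefore no $q$ below $L+R$ satisfies the defining divisibility, which forces $P(f) \ge L+R$.

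The argument is elementary; the only genuine point to get right is the reduction in the upper bound, namely identifying $P(f)$ with the order of $\bar{x}$ in $R^{\times}$ and justifying that $\bar{x}$ is a unit. This is exactly where the hypothesis $c_{-L} \neq 0$ is indispensable, since without it $\bar{x}$ could be a zero divisor and the period need not exist at all. Everything else follows from counting the elements of $R$ and comparing degrees.
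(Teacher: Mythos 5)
Your proof is correct. Note that the paper itself gives no proof of this proposition --- it is quoted directly from Berlekamp's \emph{Algebraic Coding Theory} --- so there is no internal argument to compare against; your route (using $c_{-L}\neq 0$ to make $\bar{x}$ a unit in $\mathbb{F}_p[x]/(f)$, identifying $P(f)$ with the multiplicative order of $\bar{x}$, counting the at most $p^{L+R}-1$ nonzero elements for the upper bound, and comparing degrees for the lower bound) is the standard one and supplies exactly what the citation covers. The only point worth flagging is the implicit assumption $L+R\ge 1$: if $L+R=0$ the quotient ring is zero and the stated bounds fail, but this degenerate case is excluded by the paper's setup, where $f$ is the feedback polynomial of a register with $L+R$ cells.
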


\begin{proposition}
	\label{reverse}
	\cite{berlekamp2015algebraic} For $f(x)=c_{r}x^{L+R}+\cdots+c_{-L+1}x+c_{-L}$ $(c_{-L},c_{R} \ne 0)$ and its reverse polynomial $g(x)=c_{-L}x^{L+R}\cdots+c_{R-1}x+c_R$ over $\mathbb{F}_p$, $P(f)$ = $P(g)$.
\end{proposition}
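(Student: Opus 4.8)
The plan is to work directly from the characterization of the period in Proposition~\ref{period}, namely $P(f) = \min\{q \in \mathbb{Z}^+ : f(x) \mid (x^q - 1)\}$, and to show that the \emph{set} of exponents $q$ for which $f(x) \mid (x^q-1)$ coincides with the set for which $g(x) \mid (x^q-1)$. Once the two divisibility conditions are seen to be equivalent, their minima are equal and the claim $P(f) = P(g)$ follows at once.

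The central tool is the reciprocal (reversal) operation. For a polynomial $h$ of degree $d$ with $h(0) \neq 0$, set $h^{*}(x) = x^{d} h(1/x)$. A direct computation shows $g(x) = x^{L+R} f(1/x) = f^{*}(x)$, so $g$ is exactly the reciprocal of $f$; moreover, since $c_{-L} = f(0) \neq 0$, reversal does not lower the degree, and one checks $(f^{*})^{*} = f$. I would record two elementary facts. First, reversal is multiplicative on polynomials with nonzero constant term, i.e. $(h_1 h_2)^{*} = h_1^{*} h_2^{*}$, because in that case $\deg(h_1 h_2) = \deg h_1 + \deg h_2$ and the factor $x^{d_1+d_2}$ splits as $x^{d_1} x^{d_2}$. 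Second, $x^q - 1$ is anti-self-reciprocal, since $(x^q - 1)^{*} = x^q(x^{-q} - 1) = 1 - x^q = -(x^q - 1)$.

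With these in hand the argument is short. Suppose $f(x) \mid (x^q - 1)$ and write $x^q - 1 = f(x)\, s(x)$. Comparing constant terms gives $f(0)\, s(0) = -1$, so $s(0) \neq 0$ and reversal applies multiplicatively: taking $(\cdot)^{*}$ of both sides yields $-(x^q - 1) = f^{*}(x)\, s^{*}(x) = g(x)\, s^{*}(x)$, whence $g(x) \mid (x^q - 1)$. Since $g^{*} = f$, the identical argument with the roles of $f$ and $g$ interchanged gives the reverse implication. Thus $f(x)\mid(x^q-1) \iff g(x)\mid(x^q-1)$ for every $q$, and taking the minimal such $q$ on each side gives $P(f) = P(g)$.

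The only point requiring care is the bookkeeping around the reversal map: I must verify that no spurious degree drop occurs, which is precisely where the hypotheses $c_{-L} \neq 0$ and $c_{R} \neq 0$ enter, as they guarantee $\deg g = \deg f$ and that the cofactor $s$ has nonzero constant term. I expect this degree and constant-term tracking, rather than any deep structural fact, to be the main and only obstacle. An alternative route through the multiplicative orders of the roots, the roots of $g$ being the inverses of the roots of $f$ and hence of equal order, would give the same conclusion but forces one to handle multiplicities and the $p$-part of the period separately, so the divisibility argument above is cleaner.
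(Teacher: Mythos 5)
The paper does not actually prove this proposition: it is imported verbatim from Berlekamp's \emph{Algebraic Coding Theory} via the citation, so there is no in-paper argument to compare against. Your proof is correct and self-contained, which makes it a genuine addition rather than a restyling of the paper's reasoning. The structure is sound: $g = f^*$ where $f^*(x) = x^{\deg f} f(1/x)$, the anti-self-reciprocity $(x^q-1)^* = -(x^q-1)$, multiplicativity of reversal, and the involution $(f^*)^* = f$ (which needs $c_{-L}\neq 0$) together give the equivalence $f \mid (x^q-1) \iff g \mid (x^q-1)$, and equality of the minima follows. One small correction to your bookkeeping: over $\mathbb{F}_p$ (an integral domain) degrees of products always add, so the identity $(h_1h_2)^* = h_1^*h_2^*$ needs no hypothesis on constant terms at all, and your verification that $s(0)\neq 0$ is unnecessary for that step (though it is automatic from $f(0)s(0)=-1$). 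The places where $c_{-L}, c_R \neq 0$ genuinely enter are exactly the ones you flag at the end: they ensure $\deg g = \deg f$ and $g^* = f$, so the argument can be run symmetrically. Your closing remark is also accurate: the root-based route (roots of $g$ are the inverses of roots of $f$, hence of equal multiplicative order) is the other standard proof, but it requires separate handling of repeated roots and the $p^t$ factor from Proposition~\ref{pf}, whereas your divisibility argument covers reducible $f$ with multiplicities in one stroke.
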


The calculation of the period of a general polynomial consists of the following three parts. And the complexity is $O(k)^3$.

\begin{enumerate}
	\item Factorize $f$. If $f$ is irreducible, proceed directly to the second step and skip the third step. Numerous factorization algorithms have been studied. 
	\item Calculate the period $P(f_i)$ for each irreducible factor $f_i$. This process is detailed on pages 150–153 of "Algebraic Coding Theory" \cite{berlekamp2015algebraic}.
	\item Calculate $P(f)$ according Theorem \ref{pf}. Fig. \ref{LFSR} shows the overall process of calculating $P(f)$.
\end{enumerate}

\begin{proposition}
	\label{pf}
	 Let $f$ be a reducible polynomial and $f(0) \ne 0$. Assume $f = f_1^{e_1} f_2^{e_2} \cdots f_r^{e_r}$ , where $f_1^{e_1},f_2^{e_2}, \cdots, f_r^{e_r}$ are $r$ different irreducible polynomials over $\mathbb{F}_p$, and $e_1,e_2,\cdots,e_r$ are $r$ positive integers, then we get:
	 \[
		P(f) = \text{lcm}[P(f_1),P(f_2),\cdots,P(f_r)]*\min\{p^t|p^t \ge e_1,e_2,\cdots,e_r\},	 
	 \]
	  where ``lcm” means the least common multiple.
\end{proposition}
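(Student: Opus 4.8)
The plan is to reduce the statement to two transparent special cases: a product of pairwise coprime factors, and a single prime power $g^{e}$. First I would record the divisibility criterion underlying Proposition \ref{period}: for any monic divisor $h$ of $f$ with $h(0)\ne 0$, one has $h \mid x^{q}-1$ if and only if $P(h)\mid q$. Applying this to an irreducible $g$, a nonzero root $\alpha$ of $g$ lies in some $\mathbb{F}_{p^{d}}^{\times}$, so $P(g)=\operatorname{ord}(\alpha)$ divides $p^{d}-1$ and is therefore coprime to $p$; I will use this coprimality repeatedly. Since the factors $f_{1}^{e_{1}},\dots,f_{r}^{e_{r}}$ are pairwise coprime, a polynomial is divisible by $f=\prod_i f_i^{e_i}$ exactly when it is divisible by each $f_i^{e_i}$, so $f \mid x^{q}-1$ iff $P(f_i^{e_i})\mid q$ for all $i$. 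This yields the first reduction $P(f)=\lcm[P(f_{1}^{e_{1}}),\dots,P(f_{r}^{e_{r}})]$.

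Second, the core of the argument is the prime-power formula $P(g^{e})=P(g)\cdot\min\{p^{t}\mid p^{t}\ge e\}$ for $g$ irreducible with $g(0)\ne 0$. I would write an arbitrary candidate exponent uniquely as $q=p^{a}m$ with $\gcd(m,p)=1$. In characteristic $p$ the Frobenius identity gives $x^{q}-1=(x^{m}-1)^{p^{a}}$, and $x^{m}-1$ is squarefree because $\gcd(m,p)=1$ forces its derivative $m\,x^{m-1}$ to be coprime to it. Hence the multiplicity of the irreducible $g$ in $x^{q}-1$ equals $p^{a}$ when $g\mid x^{m}-1$ and $0$ otherwise, so $g^{e}\mid x^{q}-1$ is equivalent to the two conditions $P(g)\mid m$ and $p^{a}\ge e$. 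Minimizing $q=p^{a}m$ then decouples: since $\gcd(P(g),p)=1$, the least admissible $m$ is $P(g)$, and the least admissible $p^{a}$ is $\min\{p^{t}\mid p^{t}\ge e\}$, giving the claimed formula.

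Finally I would assemble the pieces. Setting $p^{s_i}=\min\{p^{t}\mid p^{t}\ge e_i\}$, substitution gives $P(f)=\lcm[P(f_1)p^{s_1},\dots,P(f_r)p^{s_r}]$. Because each $P(f_i)$ is coprime to $p$, the $p$-part of this $\lcm$ is $p^{\max_i s_i}$ while its prime-to-$p$ part is $\lcm[P(f_1),\dots,P(f_r)]$. Observing that $p^{\max_i s_i}=\min\{p^{t}\mid p^{t}\ge\max_i e_i\}=\min\{p^{t}\mid p^{t}\ge e_1,\dots,e_r\}$, I recover exactly $P(f)=\lcm[P(f_1),\dots,P(f_r)]\cdot\min\{p^{t}\mid p^{t}\ge e_1,\dots,e_r\}$.

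\textbf{The main obstacle} I expect is the prime-power step: one must justify rigorously that the multiplicity of $g$ in $x^{q}-1$ is \emph{exactly} $p^{a}$, which relies on the separability of $x^{m}-1$ for $\gcd(m,p)=1$ together with the Frobenius collapse $x^{p^{a}m}-1=(x^{m}-1)^{p^{a}}$, and then that the minimization over $q=p^{a}m$ genuinely separates into independent minimizations of $m$ and of $p^{a}$ (a point that uses $\gcd(P(g),p)=1$ in an essential way).
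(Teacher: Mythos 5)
Your proof is correct, but there is nothing in the paper to compare it against: Proposition \ref{pf} is stated without proof, the surrounding text deferring to pages 150--153 of Berlekamp's \emph{Algebraic Coding Theory} for how periods are computed. What you have written is essentially the classical argument from that literature (it is the standard pair of results on the order of a prime power $g^{e}$ and of a product of pairwise coprime polynomials, as in Berlekamp or Lidl--Niederreiter). Your decomposition --- (i) the criterion $h \mid x^{q}-1 \Leftrightarrow P(h) \mid q$, (ii) reduction over the pairwise coprime factors $f_i^{e_i}$ to an lcm, (iii) the prime-power case via $x^{p^{a}m}-1=(x^{m}-1)^{p^{a}}$ together with squarefreeness of $x^{m}-1$ when $p \nmid m$, and (iv) extraction of the $p$-part of the lcm using $\gcd(P(f_i),p)=1$ --- is sound at every step: the multiplicity count in (iii) is justified by your derivative argument, and the decoupling of the minimization over $q=p^{a}m$ is legitimate because the constraints on $m$ (a multiple of $P(g)$ prime to $p$) and on $p^{a}$ (at least $e$) are independent, exactly as you flag in your ``main obstacle'' remark. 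The practical difference between the two routes is simply that the paper stays short by citation, whereas your argument would make the paper self-contained; it also isolates the fact $\gcd(P(f_i),p)=1$ for irreducible factors, which is what makes the final formula split cleanly into an lcm times a pure power of $p$.
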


\begin{figure}[h]
	\center
	\includegraphics[width=1\linewidth]{./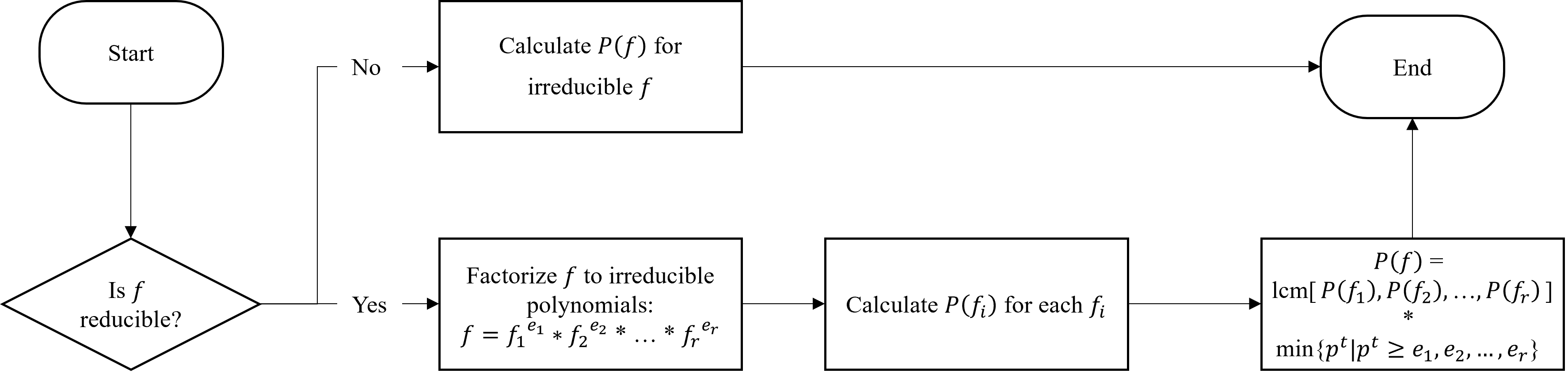}
	\caption{the process of calculating the period of $f$}
	\label{LFSR}
\end{figure}

The above proposition and theorem briefly introduce the period of LFSR and its calculation method. Next, we will relate LFSR to the inverse of banded Toeplitz matrices over finite fields.

\begin{theorem}
	\label{periodT}
	The period of $T$ in Eq. \eqref{invformula} is equal to the period $P(f)$ of the LFSR with feedback polynomial  $f(x)=c_{r}x^{L+R}+\cdots+c_{-L+1}x+c_{-L}$ $(c_{-L},c_{R} \ne 0)$. That is, $T^{P(f)}=E$ where $P(f) = \min \{q \in \mathbb{Z}_+\ : \  f(x)\ |\ (x^{q}-1)\}$
\end{theorem}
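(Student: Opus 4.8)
The plan is to identify $T$ with the companion matrix of the feedback polynomial $f$ and then translate the statement $T^{P(f)}=E$ into the divisibility condition that defines $P(f)$. First I would compute the characteristic polynomial of $T$. A matrix whose first column carries the entries $-c_{R-1}/c_R,\ldots,-c_{-L}/c_R$ and whose superdiagonal consists of ones is (the transpose of) a companion matrix; expanding $\det(\lambda E - T)$ along the first row, or simply recognizing the companion form, gives $\det(\lambda E - T)=\lambda^{L+R}+\frac{c_{R-1}}{c_R}\lambda^{L+R-1}+\cdots+\frac{c_{-L+1}}{c_R}\lambda+\frac{c_{-L}}{c_R}=\frac{1}{c_R}f(\lambda)$. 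Hence the monic associate $\hat f(\lambda)=f(\lambda)/c_R$ is exactly the characteristic polynomial of $T$.

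Next I would invoke that a companion matrix is non-derogatory, so its minimal polynomial coincides with its characteristic polynomial $\hat f$. Equivalently, $T$ is similar to the multiplication-by-$x$ operator on the quotient ring $\mathbb{F}_p[x]/(f)$; a convenient way to see this is that $T$ is the transpose of the standard companion matrix, and a matrix and its transpose are always similar, so $T^{q}=E$ holds exactly when the standard companion matrix raised to $q$ is the identity. In either formulation, for an arbitrary polynomial $g$ one has $g(T)=0$ if and only if the minimal polynomial $\hat f$ divides $g$.

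I would then apply this criterion with $g(x)=x^{q}-1$. Because $c_{-L}=f(0)\neq 0$, the class of $x$ is a unit in $\mathbb{F}_p[x]/(f)$ and $\det T=\pm c_{-L}/c_R\neq 0$, so $T$ is invertible and its powers have a well-defined order. This yields the chain of equivalences $T^{q}=E \iff (x^{q}-1)(T)=0 \iff \hat f(x)\mid (x^{q}-1) \iff f(x)\mid (x^{q}-1)$, where the final step uses that $f$ and $\hat f$ differ only by the nonzero scalar $c_R$. Taking the least positive $q$ for which this holds and comparing with the definition $P(f)=\min\{q\in\mathbb{Z}^+ : f(x)\mid (x^{q}-1)\}$ from Proposition \ref{period} gives $T^{P(f)}=E$ together with the minimality of $P(f)$ as the multiplicative order of $T$.

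The main obstacle I anticipate is the bookkeeping in the first step: verifying that the characteristic polynomial of $T$ is precisely the monic associate of $f$, with every coefficient and sign correct for the stated ordering $c_R,c_{R-1},\ldots,c_{-L}$, and confirming that $T$ is genuinely non-derogatory so the minimal-equals-characteristic polynomial step is valid. Everything afterward is a formal translation that hinges on this identification being exact. If one prefers to bypass companion-matrix theory entirely, the same conclusion follows by checking directly that a single step of the LFSR recurrence \eqref{recur} is realized by multiplication by $T^{\top}$, so that the order of $T$ equals the period of the register, which is $P(f)$ by Proposition \ref{period}.
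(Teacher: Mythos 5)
Your proof is correct, but it takes a genuinely different route from the paper's. The paper argues dynamically through the LFSR itself: one step of the recurrence \eqref{recur} is realized as right multiplication of the state row vector by $T$, so Proposition \ref{period} (quoted from Berlekamp) gives $x_i = x_{i+P(f)}$ for every seed, hence every state vector is fixed by $T^{P(f)}$ as in Eq. \eqref{transP}, which forces $T^{P(f)}=E$; minimality is then inherited from the minimality of the LFSR period. This is essentially the fallback argument you sketch in your closing sentence. Your main argument instead is purely linear-algebraic: you identify the characteristic polynomial of $T$ as the monic associate $f(\lambda)/c_R$, invoke the fact that a (transposed) companion matrix is non-derogatory so that its minimal polynomial equals $f/c_R$, and then read off the equivalence $T^{q}=E \iff f(x)\mid(x^{q}-1)$ for \emph{all} $q$ at once. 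What your route buys: minimality of $P(f)$ as the order of $T$ is automatic from the equivalence rather than transferred from the cited proposition, and you avoid the step the paper glosses over --- passing from ``Eq. \eqref{transP} holds for any $i$'' to $T^{P(f)}=E$ tacitly requires that the reachable LFSR states span all of $\mathbb{F}_p^{L+R}$ (e.g.\ by taking the standard basis vectors as seeds), which the paper does not spell out. What the paper's route buys: given Proposition \ref{period} it is shorter, needs no companion-matrix or minimal-polynomial theory, and ties the theorem directly to the LFSR narrative used throughout Section \ref{s3}. Both proofs hinge on the same identification --- that $T$ is (the transpose of) the companion matrix of $f$ --- and your coefficient bookkeeping for that identification is accurate.
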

\begin{proof}
	Consider the recurrence relation in Eq. \eqref{recur}, where its transition matrix is exactly $T$, as shown in Eq. \eqref{transform}.
	\begin{equation}
		\label{transform}
		\begin{bmatrix}
			x_{i} \\x_{i-1} \\x_{i-2} \\\vdots  \\x_{i-L-R+1}
		\end{bmatrix}' = 
		\begin{bmatrix}
			x_{i-1} \\x_{i-2} \\x_{i-3} \\\vdots  \\x_{i-L-R}
		\end{bmatrix}'
		T=
		\begin{bmatrix}
			-\frac{c_{R-1}}{c_{R}} & 1 & 0  & \cdots & 0\\
			-\frac{c_{R-2}}{c_{R}} & 0 & 1  & \cdots & 0\\
			\vdots & \vdots & \ddots & \ddots & \vdots\\
			-\frac{c_{-L+1}}{c_{R}} & 0 & \cdots & 0 & 1\\
			-\frac{c_{-L}}{c_{R}}  & 0 & \cdots & 0 & 0
		\end{bmatrix}
	\end{equation}
	Given that the period of $f(x)$ is $P(f)$, $ \forall i \in \mathbb{Z}_+$, we have $x_i=x_{i+P(f)}$. Then we can derive the following equation:
	
	\begin{equation}
		\label{transP}
		\begin{bmatrix}
			x_{i-1} \\x_{i-2} \\x_{i-3} \\\vdots  \\x_{i-L-R}
		\end{bmatrix}' =
		\begin{bmatrix}
			x_{i-1+P(f)} \\x_{i-2+P(f)} \\x_{i-3+P(f)} \\\vdots  \\x_{i-L-R+P(f)}
		\end{bmatrix}' =     
		\begin{bmatrix}
			x_{i-1} \\x_{i-2} \\x_{i-3} \\\vdots  \\x_{i-L-R}
		\end{bmatrix}'
		\begin{bmatrix}
			-\frac{c_{R-1}}{c_{R}} & 1 & 0  & \cdots & 0\\
			-\frac{c_{R-2}}{c_{R}} & 0 & 1  & \cdots & 0\\
			\vdots & \vdots & \ddots & \ddots & \vdots\\
			-\frac{c_{-L+1}}{c_{R}} & 0 & \cdots & 0 & 1\\
			-\frac{c_{-L}}{c_{R}}  & 0 & \cdots & 0 & 0
		\end{bmatrix}^{P(f)}.
	\end{equation}
	For any $i$, Eq. \eqref{transP} always holds, so $T^{P(f)}=E$. Since $P(f)$ is the minimum period of $f$, it is also the minimum value for which $T^{P(f)}=E$.
\end{proof}

\begin{corollary}
	The period of determinant of $n$-order Toeplitz matrix with bandwidth $k=L+1+R$ over $\mathbb{F}_p$ is lcm$[p-1,P(f)]$ and can be calculated in $O(k^4)$.
\end{corollary}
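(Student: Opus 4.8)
The plan is to read off the $n$-dependence of the determinant directly from the closed form in Eq.~\eqref{det} and to show that it is governed entirely by the order of $c_R$ and by the period of $T$, both of which are already under control. Writing $m=n-L-R$, Eq.~\eqref{det} reads $|M|=(-1)^{Rm}\,c_R^{\,m}\,\det(T^{m}A\mid B)$, and the crucial structural observation is that $A$, $B$ and $T$ have sizes determined solely by the bandwidth $k=L+1+R$ and are independent of $n$. Hence every trace of $n$ sits in three places: the scalar power $c_R^{\,m}$, the sign $(-1)^{Rm}$, and the matrix power $T^{m}$ inside the $(k-1)\times(k-1)$ determinant. I would establish periodicity by analysing these three factors separately and then recombining.

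First, Theorem~\ref{periodT} already supplies $T^{P(f)}=E$, so $T^{m}$—and therefore the whole fixed-size determinant $\det(T^{m}A\mid B)$—is invariant under $m\mapsto m+P(f)$. Second, since $c_R\neq 0$ in $\mathbb{F}_p$, Fermat's little theorem gives $c_R^{\,p-1}=1$, so $c_R^{\,m}$ is invariant under $m\mapsto m+(p-1)$. Third, the sign is harmless: for $p=2$ one has $-1=1$, while for odd $p$ the element $-1$ has order $2$ in $\mathbb{F}_p^{*}$ and $2\mid p-1$, so $(-1)^{Rm}$ already repeats with period dividing $p-1$. Setting $Q=\lcm[p-1,P(f)]$, I would then verify in one line that $m\mapsto m+Q$ fixes all three factors: $c_R^{\,m+Q}=c_R^{\,m}$ because $(p-1)\mid Q$; $T^{m+Q}=T^{m}$ because $P(f)\mid Q$ and $T^{P(f)}=E$; and $(-1)^{R(m+Q)}=(-1)^{Rm}$ because $Q$ is even when $p$ is odd and the sign is trivial when $p=2$. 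Therefore $|M_{n+Q}|=|M_{n}|$, establishing periodicity with period dividing $Q$.

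For the complexity claim, I would invoke the period computation already described (cost $O(k^3)$) to obtain $P(f)$, reduce the exponent as $m\bmod P(f)$ so that it is bounded by $P(f)\le p^{k-1}-1$, and then evaluate $T^{\,m\bmod P(f)}$ by binary exponentiation of the $(k-1)\times(k-1)$ matrix $T$. The reduced exponent has $O(k)$ bits for fixed $p$, so this requires $O(k)$ matrix multiplications at $O(k^3)$ each, giving $O(k^4)$; the remaining scalar power $c_R^{\,m}$, the sign, and the single fixed-size determinant are of lower order. This is precisely the gain of periodicity: the $\log n$ factor that a direct fast exponentiation would incur is replaced by $\log P(f)=O(k)$, so the determinant of an arbitrarily large order is reached in $O(k^4)$.

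The step I expect to be the main obstacle is the \emph{exactness} of the period, i.e.\ that $Q=\lcm[p-1,P(f)]$ is the genuine period and not merely an upper bound. Because the three factors are multiplied together and then folded into a single determinant, the argument above only yields that the period divides $Q$; one must rule out accidental cancellations in which the scalar/sign part and the matrix part conspire to produce a common sub-period. A careful treatment should separate the scalar contribution $c_R^{\,m}$ from the matrix contribution through $\det(T^{m}A\mid B)$, whose minimal period is tied to $P(f)$ by Theorem~\ref{periodT}, and should note that the order of $c_R$ in $\mathbb{F}_p^{*}$ divides $p-1$ but may be a proper divisor—so $\lcm[p-1,P(f)]$ is best read as the worst-case (generic $c_R$) period, with the true period in any concrete instance equal to the least common multiple of $\operatorname{ord}(c_R)$ (together with the sign's order) and the minimal period of $\det(T^{m}A\mid B)$.
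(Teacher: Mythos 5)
Your proposal follows essentially the same route as the paper: both read the $n$-dependence off Eq.~\eqref{det}, eliminate the matrix power using $T^{P(f)}=E$ from Theorem~\ref{periodT}, eliminate the scalar power using $c_R^{p-1}=1$, and obtain $O(k^4)$ by reducing the exponent modulo $P(f)$ before binary exponentiation, since $\log P(f)\approx k$ by Proposition~\ref{scope}. If anything, you are more careful than the paper on two points it glosses over: the sign factor $(-1)^{R(n-L-R)}$, and the observation that this argument only shows the period \emph{divides} $\lcm[p-1,P(f)]$ rather than equals it exactly.
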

\begin{proof}
	According to Eq. \eqref{det} and the periodicity of matrix $T$, we only need to compute $T^{n-L-R \mod P(f)}$ instead of $T^{n-L-R}$ over $\mathbb{F}_p$. The complexity of computing $T^{n-L-R \mod P(f)}$ using the method of bisection is $O(k^3\log P(f))$. Furthermore, based on Proposition \ref{scope}, we have $P(f) \le p^{L+R}-1$, so $\log P(f) \approx  k$ and the final complexity is $O(k^4)$. Over finite fields we have $c_k^{p-1}=1$ and $T^{P(f)}=E$, so the period of the determinant is lcm$[p-1,P(f)]$.
\end{proof}

Now, calculate the inverse. Starting from the top-left of $M^{-1}$, we partition the matrix into blocks of
size $P(f)*P(f)$. This results in $\lceil n/P(f) \rceil^2$ blocks, which are represented $B_{i,j}$. If $P(f) \nmid n$, the last block in each row and column will be smaller than $P(f)*P(f)$.

\begin{theorem}
	\label{row}
	For $i-2 \leq j \leq \lfloor n/P(f) \rfloor$, $B_{i,j} = B_{i,j-1}$.
\end{theorem}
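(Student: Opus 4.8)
The plan is to prove the block identity entrywise: $B_{i,j}=B_{i,j-1}$ says exactly that $x_{r,c}=x_{r,c-P(f)}$ for every matrix row $r$ in the $i$-th block strip, $(i-1)P(f)<r\le iP(f)$, and every column $c$ in the $j$-th block strip. Rather than track one row at a time, I would compare the two columns $c$ and $c-P(f)$ simultaneously across all rows by introducing the difference sequence $d_r:=x_{r,c}-x_{r,c-P(f)}$ and showing it vanishes on the rows of block $i$.

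The first step is to record that $(d_r)_r$ obeys the column recurrence \eqref{relationCol}. Subtracting the defining systems \eqref{eqCol} for columns $c$ and $c-P(f)$, the inhomogeneous right-hand sides cancel except at the two ``diagonal'' rows $r=c+R$ and $r=c-P(f)+R$; thus $d$ satisfies the homogeneous recurrence with characteristic polynomial $f$ everywhere, save for a $+$-impulse at $r=c+R$ and a $-$-impulse exactly $P(f)$ rows higher at $r=c-P(f)+R$. By Theorem \ref{periodT} the transition matrix $T$ of this recurrence has order $P(f)$, so its impulse response is $P(f)$-periodic. The crux is then a cancellation: the $-$-impulse launches a homogeneous solution whose length-$(L+R)$ state, after exactly one period $P(f)$, returns to the seed state $(1,0,\dots,0)$ it started from, so the $+$-impulse placed precisely there annihilates the ongoing solution. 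A short computation with $T^{P(f)}=E$ (using $H_{i-i_0}=H_{i-i_1}$ because the two impulses are $P(f)$ apart) then gives $d_r=0$ for every $r$ below the lower impulse, while $d_r=0$ above the upper impulse because the solution has not yet started there. This is what forces whole blocks to repeat.

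To finish I would translate the impulse positions into block indices. The nonzero window of $d$ is confined to the rows $r\in[c-P(f)+R,\,c+R-1]$, a diagonal band of height $P(f)$, and the stated range $i-2\le j\le\lfloor n/P(f)\rfloor$ is designed to hold the row strip $((i-1)P(f),\,iP(f)]$ of block $i$ off this band for every column $c$ of block $j$, with the upper bound also keeping $c$ and $c-P(f)$ clear of the right-boundary layer (where some entries of \eqref{eqCol} are truncated to zero and the clean recurrence fails). I would make this quantitative by a direct interval comparison between $((i-1)P(f),\,iP(f)]$ and $\bigcup_{c}[c-P(f)+R,\,c+R-1]$.

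I expect the main obstacle to be guaranteeing the seed of the argument, namely that $d$ already vanishes on the first $R$ rows so that it is genuinely $0$ above the upper impulse: this needs the top-$R$ rows, produced from the corner block of \eqref{invformula} and the row recurrence \eqref{relationRow}, to be $P(f)$-periodic in the column direction \emph{before} the column recurrence propagates downward. Establishing that top-row periodicity (via Proposition \ref{reverse}, since the row recurrence carries the reverse polynomial $g$ with $P(g)=P(f)$), together with the boundary bookkeeping that pins down where the homogeneous recurrence actually holds, is where the real work lies; the impulse-cancellation itself is then routine.
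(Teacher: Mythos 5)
Your proposal has a genuine gap, and it sits exactly where you say "the real work lies": the seed. Because the column recurrence \eqref{relationCol} propagates top-down, your difference sequence $d_r=x_{r,c}-x_{r,c-P(f)}$ is only forced to vanish above the impulse band if you already know $d_1=\cdots=d_R=0$, i.e.\ that the top $R$ rows of $M^{-1}$ are $P(f)$-periodic across columns. That deferred lemma is not a boundary detail --- it is the theorem in miniature, and the tool you name for proving it (the row recurrence \eqref{relationRow}, whose transition matrix carries the reverse polynomial $g$ with $P(g)=P(f)$ by Proposition \ref{reverse}) is precisely the paper's entire proof. The paper works row-wise and needs no seed at all: for each row $r$ of block strip $i$, take as state any $L+R$ consecutive entries $\hat{B}_{i,j-1}$; sliding right by $P(f)$ columns multiplies this state by $\hat{T}^{P(f)}$, where $\hat{T}$ is the companion-type matrix of $g$, and $\hat{T}^{P(f)}=E$ by Propositions \ref{period}, \ref{reverse} and Theorem \ref{periodT}. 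The only hypothesis is that no diagonal impulse (column $r+L$) is crossed along the way, which holds for \emph{every} row of block $i$ once $j\ge i+2$ --- not just the top $R$ rows. So $\hat{B}_{i,j}=\hat{B}_{i,j-1}\hat{T}^{P(f)}=\hat{B}_{i,j-1}$ directly, whatever the entries are. Consequently, once you fill your gap you have already proven the theorem, and the impulse-cancellation superstructure is dead weight for the stated range: for $j\ge i+2$ the rows of block $i$ lie entirely \emph{above} both impulses ($r\le iP(f)<c-P(f)+R$), so only the trivial "solution has not yet started" case of your argument is ever invoked.

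Two further remarks. First, the impulse-cancellation computation itself is sound: with state $D_r=(d_{r-L-R+1},\dots,d_r)$ one gets $D_{c+R}=D_{c-P(f)+R}\bar{T}^{P(f)}+(0,\dots,0,\tfrac{1}{c_R})=(0,\dots,0,-\tfrac{1}{c_R})+(0,\dots,0,\tfrac{1}{c_R})=0$ using $\bar{T}^{P(f)}=E$, so everything below the second impulse vanishes. But this is the natural tool for blocks \emph{below} the diagonal (Theorem \ref{down}), where the block's rows sit under the band; the paper disposes of that case by transposition instead. Second, your "interval comparison," if actually carried out, would show that the range as printed ($i-2\le j$) cannot be taken literally: for $j\in\{i,i+1\}$ the band $[c-P(f)+R,\,c+R-1]$ meets the rows of block $i$ and the conclusion is false (diagonal blocks differ from off-diagonal ones); both you and the paper implicitly need $j\ge i+2$. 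Also minor: the column-recurrence transition matrix is $\bar{T}$ (the antidiagonal transpose of $T$), not $T$ itself, and there is no "right-boundary layer" where \eqref{eqCol} fails for $i\le n$ --- the restriction $j\le\lfloor n/P(f)\rfloor$ is only there because the final partial block has a different size.
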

\begin{proof}
	
	All elements in these submatrices are at least $P(f)+1$ away from the diagonal. When computing the value of each element in $B_{i,j}$ by rows, only the recursive formula in upper of Eq. \eqref{relationRow} is applied because $P(f)+1 > L$. Let $\hat{B_{i,j}}$ be the arbitrary consecutive $L+R$ columns in $B_{i,j}$ and we have:
	\begin{equation}
		\hat{B_{i,j}}=\hat{B_{i,j-1}}\hat{T}^{P(F)}=\hat{B_{i,j-1}}
		\begin{bmatrix}
			0 & 0 & \cdots & 0 & -\frac{c_{R}}{c_{-L}}\\
			1 & 0 & \cdots & 0 & -\frac{c_{R-1}}{c_{-L}}\\
			\vdots & \ddots  & \ddots & \vdots & \vdots\\
			0 & \cdots & 1 & 0 & -\frac{c_{-L+2}}{c_{-L}}\\
			0 & \cdots & 0 & 1 & -\frac{c_{-L+1}}{c_{-L}}
		\end{bmatrix}^{P(f)}
	\end{equation}
	
	We can derive $\hat{T}^{P(F)}=E$ using Proposition \ref{period}, \ref{reverse} and Theorem \eqref{periodT}. Then $\hat{B_{i,j}}=\hat{B_{i,j-1}}$. Any consecutive $L+R$ columns of $B_{i,j}$ and $B_{i,j-1}$ are equal, so $B_{i,j} = B_{i,j-1}$.
\end{proof}

\begin{theorem}
	\label{col}
	For $i \leq j-2$ and $i \leq \lfloor n/P(f) \rfloor-1$, $B_{i,j} = B_{i+1,j}$.
\end{theorem}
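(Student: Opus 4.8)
The plan is to prove this as the column-wise dual of Theorem \ref{row}: whereas that theorem generated the entries of a block by rows and exploited the horizontal shift matrix $\hat{T}$, here I would generate the entries by columns and exploit the vertical shift matrix $T$. First I would note that the hypothesis $i \le j-2$ places both $B_{i,j}$ and $B_{i+1,j}$ strictly above the diagonal, with every entry at distance at least $P(f)+1$ from it. Since $R \le L+R \le P(f)$ by Proposition \ref{scope}, we have $P(f)+1 > R$, so when the entries of column $j$ are produced downward by the recurrence only the homogeneous (upper) branch of Eq.\ \eqref{relationCol} is ever invoked; the special ``$=1$'' case (which occurs at $i=j+R$) never arises in this region.

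Next I would reindex the column recurrence to identify its transition matrix. Substituting $s=i+t-R$ in the upper branch of Eq.\ \eqref{relationCol} gives, for a fixed column, $x_{i,j}=-\tfrac{1}{c_{R}}\bigl(c_{R-1}x_{i-1,j}+c_{R-2}x_{i-2,j}+\cdots+c_{-L}x_{i-L-R,j}\bigr)$, which is exactly the LFSR recurrence of Eq.\ \eqref{recur}. Hence the matrix that advances a window of $L+R$ consecutive rows downward by one step is precisely the companion matrix $T$ of the feedback polynomial $f$, and Theorem \ref{periodT} yields $T^{P(f)}=E$ \emph{directly} — unlike the row case, no passage to the reverse polynomial via Proposition \ref{reverse} is needed here.

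I would then mirror the block argument of Theorem \ref{row}. Letting $\tilde{B}_{i,j}$ denote any $L+R$ consecutive rows of $B_{i,j}$, advancing the column recurrence $P(f)$ steps expresses the corresponding rows of $B_{i+1,j}$ as $T^{P(f)}$ applied to $\tilde{B}_{i,j}$; since $T^{P(f)}=E$ this gives $\tilde{B}_{i+1,j}=\tilde{B}_{i,j}$. Equivalently, $x_{r+P(f),c}=x_{r,c}$ for every entry $(r,c)$ of $B_{i,j}$, so shifting the block down by $P(f)$ rows leaves it unchanged, i.e.\ $B_{i,j}=B_{i+1,j}$. The hypothesis $i \le \lfloor n/P(f)\rfloor-1$ guarantees that row block $i+1$ remains inside $M^{-1}$ as a full $P(f)\times P(f)$ block, so the comparison is well defined.

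The main obstacle I anticipate is bookkeeping rather than conceptual. I must verify that the downward recurrence for the entries of block $i+1$ never reaches back across the diagonal: the recurrence for $x_{i',j}$ uses the $L+R$ rows $x_{i'-1,j},\dots,x_{i'-L-R,j}$, and since $L+R\le P(f)$ (Proposition \ref{scope}) these lie within blocks $B_{i,j}$ and $B_{i+1,j}$, all above the diagonal, so the homogeneous recurrence indeed applies throughout. I would also take care to keep the orientation consistent — the column recurrence acts on windows of consecutive \emph{rows} (dual to the consecutive columns $\hat{B}_{i,j}$ of Theorem \ref{row}), so that it is the period $P(f)$ of $T$ itself, and not that of its reverse, that governs the vertical repetition.
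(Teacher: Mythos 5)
Your proposal is correct and takes essentially the same route as the paper's proof: restrict to the homogeneous (upper) branch of Eq.~\eqref{relationCol} because all entries involved lie strictly above the diagonal, propagate windows of $L+R$ consecutive rows down each column, and conclude $B_{i,j}=B_{i+1,j}$ from the fact that the $P(f)$-th power of the transition (companion) matrix is the identity. The only differences are cosmetic: the paper keeps the window rows in natural order and therefore obtains the antidiagonal transpose $\bar{T}$ of $T$ (observing it has the same period), whereas you reindex so that Theorem~\ref{periodT} applies to $T$ directly; also, your claim that every entry of $B_{i+1,j}$ lies at distance at least $P(f)+1$ above the diagonal is overstated (when $i+1=j-1$ the distance can be as small as $1$), but this is harmless because the exceptional case $i'=j'+R$ of Eq.~\eqref{relationCol} sits \emph{below} the diagonal, so being strictly above it already suffices.
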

\begin{proof}
	
	The proof of this theorem is similar to the previous one. When calculating each element of $M^{-1}$ by column, the recursive formula in upper right of Eq. \eqref{relationCol} is applied because $P(f)+1>R$. Let $\bar{B_{i,j}}$ be the arbitrary consecutive $L+R$ rows in $B_{i,j}$ and we have:
	
	\begin{equation}
	\bar{B_{i+1,j}} = \bar{B_{i,j}}\bar{T}^{P(f)} =  \bar{B_{i,j}}
		\begin{bmatrix}
			0 & 1 & \cdots & 0 & 0\\
			0 & 0 & \ddots & \vdots & \vdots\\
			\vdots & \vdots & \ddots & 1 & 0\\
			0 & 0 & \cdots & 0 & 1\\
			-\frac{c_{-L}}{c_{R}} & -\frac{c_{-L+1}}{c_{R}} & \cdots & -\frac{c_{R-2}}{c_{R}} & -\frac{c_{R-1}}{c_{R}}
		\end{bmatrix}^{P(f)} 
	\end{equation}
	
	$\bar{T}$ is the antidiagonal transpose of $T$ in Theorem \eqref{periodT} and has the same period, so $\bar{T}^{P(f)}=E$. Then $\bar{B_{i,j}}=\bar{B_{i+1,j}}$. Any consecutive $L+R$ rows of $B_{i,j}$ and $B_{i+1,j}$ are equal, so $B_{i,j} = B_{i+1,j}$.
\end{proof}

\begin{theorem}
	\label{up}
	For $1 \le i < j \le \lfloor n/P(f) \rfloor$, $B_{i,j}$ = $B_{1,2}$, and $B_{i,\lceil n/P(f) \rceil}$ is equal to the left $n \mod P(f)$ columns of $B_{1,2}$.
\end{theorem}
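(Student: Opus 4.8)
The plan is to obtain Theorem~\ref{up} purely as a composition of the two one-directional periodicity results already in hand, the horizontal periodicity $B_{i,j}=B_{i,j-1}$ of Theorem~\ref{row} and the vertical periodicity $B_{i,j}=B_{i+1,j}$ of Theorem~\ref{col}, so that no new LFSR computation is needed. The only genuine content is to check that the admissible index ranges in those two theorems chain together to reach \emph{every} above-diagonal block, and, for the truncated last column, to track the column bookkeeping.

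For the first statement, I would fix any block $B_{i,j}$ with $1\le i<j\le\lfloor n/P(f)\rfloor$ and first collapse it vertically to the top of its block column. The equalities $B_{k,j}=B_{k+1,j}$ of Theorem~\ref{col} are valid for $1\le k\le j-2$, and chaining them gives $B_{1,j}=B_{2,j}=\cdots=B_{j-1,j}$. Since every above-diagonal block in column $j$ lies in rows $1,\dots,j-1$, this already yields $B_{i,j}=B_{1,j}$ for all $i<j$ (the case $j=2$ being trivial). In particular the block $B_{i,i+1}$ immediately above the diagonal is reached, because the equality connecting it, $B_{i-1,i+1}=B_{i,i+1}$, uses $k=j-2$, which is still admissible. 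Next I would collapse horizontally within row $1$: with $i=1$ the condition $i-2\le j$ of Theorem~\ref{row} holds for every $j$, so $B_{1,2}=B_{1,3}=\cdots=B_{1,\lfloor n/P(f)\rfloor}$. Composing the two chains gives $B_{i,j}=B_{1,j}=B_{1,2}$, as claimed.

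For the second statement, write $J=\lceil n/P(f)\rceil=\lfloor n/P(f)\rfloor+1$ and let $w=n\bmod P(f)$ be the width of the truncated last block column. The same vertical chain of Theorem~\ref{col} applies with $j=J$ (the bound $k\le J-2=\lfloor n/P(f)\rfloor-1$ coincides with its secondary condition $k\le\lfloor n/P(f)\rfloor-1$), so every above-diagonal block in the last column reduces to $B_{1,J}$. It remains to identify $B_{1,J}$ with the leftmost $w$ columns of $B_{1,2}$, and here I would reuse the relation $\hat{T}^{P(f)}=E$ from the proof of Theorem~\ref{row}, which shows that along row $1$ the entries of $M^{-1}$ are periodic in the column direction with period $P(f)$. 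The last block column occupies columns $\lfloor n/P(f)\rfloor P(f)+1,\dots,n$, whose residues modulo $P(f)$ are exactly $1,\dots,w$; these are the same phases as the first $w$ columns of $B_{1,2}$, which occupies columns $P(f)+1,\dots,2P(f)$ and starts at residue $1$. Iterating $\hat{B}_{1,j}=\hat{B}_{1,j-1}$ back by full periods therefore identifies the truncated block with the first $w$ columns of $B_{1,2}$.

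The argument is essentially bookkeeping, so the main obstacle is boundary control rather than any estimate: one must verify that the ranges in Theorems~\ref{row} and~\ref{col} chain all the way up to, and including, the blocks $B_{i,i+1}$ adjacent to the diagonal, and that the phase modulo $P(f)$ of the truncated final column matches that of $B_{1,2}$. An off-by-one in either range would leave the near-diagonal blocks or the last column uncaptured, so this is the step I would check most carefully.
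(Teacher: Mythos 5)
Your proposal is correct and follows essentially the same route as the paper, whose proof of Theorem~\ref{up} simply states that the result follows from Theorems~\ref{row} and~\ref{col} and that the truncated last column is handled as in those proofs; you have merely supplied the chaining and index bookkeeping (including the phase argument via $\hat{T}^{P(f)}=E$ for the $n \bmod P(f)$ columns) that the paper leaves implicit. Your careful reading of the range conditions (noting that the paper's ``$i-2\le j$'' is harmless, and that the two conditions of Theorem~\ref{col} coincide at $j=\lceil n/P(f)\rceil$) is exactly the verification the paper omits.
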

\begin{proof}
	The proof of this theorem can be simply derived from Theorem \eqref{row} and \eqref{col}. Furthermore, the proof of $B_{i,\lceil n/P(f) \rceil}$ is similar to the proofs of the previous two theorems, and thus is omitted here.
\end{proof}

\begin{theorem}
	\label{down}
	For $1 \le j < i \le \lfloor n/P(f) \rfloor$, $B_{i,j}$ = $B_{2,1}$, and $B_{\lceil n/P(f) \rceil,j}$ is equal to the upper $n \mod P(f)$ rows of $B_{2,1}$.
\end{theorem}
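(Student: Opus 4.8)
The plan is to obtain Theorem~\ref{down} as the transpose-mirror of Theorem~\ref{up}, so that the below-diagonal region is reduced to the already-settled above-diagonal region rather than re-analysed from scratch. First I would set $N=M^{T}$ and observe that $N$ is again a banded Toeplitz matrix, now with lower bandwidth $R$ and upper bandwidth $L$ and with the reversed feedback polynomial $g(x)=c_{-L}x^{L+R}+\cdots+c_{R-1}x+c_{R}$. By Proposition~\ref{reverse} we have $P(g)=P(f)$, so partitioning $N^{-1}$ into $P(f)\times P(f)$ blocks is consistent with the partition of $M^{-1}$ and $\lfloor n/P(g)\rfloor=\lfloor n/P(f)\rfloor$. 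Writing $B^{M}_{i,j}$ and $B^{N}_{i,j}$ for the respective blocks, the identity $N^{-1}=(M^{T})^{-1}=(M^{-1})^{T}$ gives $B^{N}_{i,j}=(B^{M}_{j,i})^{T}$ for every $i,j$.

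Next I would apply Theorem~\ref{up} to $N$: for $1\le i<j\le\lfloor n/P(f)\rfloor$ all strictly-upper blocks of $N^{-1}$ coincide with $B^{N}_{1,2}$, and the partial block $B^{N}_{i,\lceil n/P(f)\rceil}$ equals the left $n\bmod P(f)$ columns of $B^{N}_{1,2}$. Transposing these equalities through $B^{N}_{i,j}=(B^{M}_{j,i})^{T}$ converts the strictly-upper region of $N^{-1}$ into the strictly-lower region of $M^{-1}$, turns $B^{N}_{1,2}$ into $B^{M}_{2,1}$, and---since transposition exchanges ``left columns'' with ``upper rows''---turns the truncated last block column into the truncated last block row. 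This yields exactly $B^{M}_{i,j}=B^{M}_{2,1}$ for $1\le j<i\le\lfloor n/P(f)\rfloor$ together with the claim that $B^{M}_{\lceil n/P(f)\rceil,j}$ equals the upper $n\bmod P(f)$ rows of $B^{M}_{2,1}$.

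Alternatively, one can argue directly in the spirit of Theorems~\ref{row} and \ref{col}: below the diagonal every entry is produced by the non-diagonal branch of the column recurrence \eqref{relationCol} (guaranteed by $P(f)+1>R$), so the same $\hat T^{P(f)}=E$ and $\bar T^{P(f)}=E$ computations give a horizontal shift $B_{i,j}=B_{i,j-1}$ and a vertical shift $B_{i,j}=B_{i+1,j}$ valid throughout the region lying at least $P(f)+1$ below the diagonal; chaining these two shifts drives any $B_{i,j}$ with $j<i$ to the reference block $B_{2,1}$. The delicate point in either route is the boundary behaviour: the final shift must land on $B_{2,1}$, which is only one block below the diagonal and therefore the first block in which the diagonal branch of \eqref{relationCol} reappears, and the last block row is truncated when $P(f)\nmid n$. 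I expect this edge analysis to be the main obstacle, and it is precisely what the transpose route handles for free, since those same cases were already disposed of in the proof of Theorem~\ref{up}.
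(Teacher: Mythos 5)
Your proposal is correct and takes essentially the same approach as the paper: the paper's (very brief) proof likewise argues that the below-diagonal case follows either by repeating the arguments of Theorems~\ref{row} and \ref{col}, or by transposing the original matrix so that the blocks below the diagonal become blocks above the diagonal, which is exactly your primary route. Your write-up merely supplies the details the paper leaves implicit, namely that $M^{T}$ is banded Toeplitz with the reversed polynomial, that Proposition~\ref{reverse} makes the block partitions compatible, and that $(M^{T})^{-1}=(M^{-1})^{T}$ carries Theorem~\ref{up} over to the lower-triangular region.
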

\begin{proof}	
	We have previously proven that all block matrices above the diagonal are equal, and the same holds for those below the diagonal. The proof is not only similar, but we can also obtain this conclusion by transposing the original matrix (which converts the blocks below the diagonal to those above the diagonal).
\end{proof}

\begin{theorem}
	\label{mid}
	For $1 \le i \le \lfloor n/P(f) \rfloor$, $B_{i,i}$ = $B_{1,1}$, and $B_{\lceil n/P(f) \rceil,\lceil n/P(f) \rceil}$ is equal to the upper left $(n \mod P(f))*(n \mod P(f))$ submatrix of $B_{1,1}$.
\end{theorem}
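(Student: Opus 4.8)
The plan is to prove Theorem~\ref{mid} by the same diagonal shift-by-$P(f)$ philosophy used in Theorems~\ref{row}--\ref{down}, the only new ingredient being the inhomogeneous (unit) term that the two recurrences \eqref{relationRow} and \eqref{relationCol} acquire on the main diagonal. Concretely, I would first establish the single step $B_{i,i}=B_{i+1,i+1}$ for $1\le i\le\lfloor n/P(f)\rfloor-1$ and then iterate it to get $B_{i,i}=B_{1,1}$ for every full diagonal block; the truncated corner block $B_{\lceil n/P(f)\rceil,\lceil n/P(f)\rceil}$ is obtained at the end exactly as the truncations in Theorems~\ref{up} and \ref{down}, by running the same recurrence for only $n\bmod P(f)$ steps before the matrix terminates, which yields the upper-left $(n\bmod P(f))\times(n\bmod P(f))$ sub-block of $B_{1,1}$.

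For the inductive step I would read each diagonal block through both band recurrences. The entries of $B_{i+1,i+1}$ are propagated from its already-characterised neighbours: the sub-diagonal block on its left is $B_{i+1,i}=B_{2,1}$ by Theorem~\ref{down} and the super-diagonal block above it is $B_{i,i+1}=B_{1,2}$ by Theorem~\ref{up}, and these boundary data are the same for every $i$. The propagation uses precisely the homogeneous transitions $\hat T$ and $\bar T$ of \eqref{relationRow} and \eqref{relationCol}, which satisfy $\hat T^{P(f)}=\bar T^{P(f)}=E$, as established in the proofs of Theorems~\ref{row} and \ref{col} via Theorem~\ref{periodT} and Propositions~\ref{period} and \ref{reverse}. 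The point that has no analogue in the off-diagonal case is the unit on the main diagonal: in row $r$ it sits at column $r+L$ and in column $c$ at row $c+R$, so the selecting conditions $i=j-L$ and $i=j+R$ are invariant under the diagonal shift $(i,j)\mapsto(i+P(f),j+P(f))$. Hence the source occupies an identical position relative to $B_{i,i}$ and to $B_{i+1,i+1}$. Identical coefficients, identical boundary data, an identical relative source position, and period-$P(f)$ transitions then force $B_{i+1,i+1}=B_{i,i}$, so all interior diagonal blocks coincide.

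The hard part will be the base of the induction, namely $B_{1,1}=B_{2,2}$. Every interior diagonal block $B_{i,i}$ ($i\ge2$) is bordered by the off-diagonal blocks $B_{2,1}$ and $B_{1,2}$, whereas the corner block $B_{1,1}$ is bordered by the genuine edge of $M$, i.e.\ by empty (zero) neighbours; a priori these different boundary data could drive the two blocks to different values. Resolving this is where the periodicity $\hat T^{P(f)}=E$ must really be used: I would argue that the influence of any left/top boundary on a diagonal block recurs unchanged after $P(f)$ steps, so that the zero boundary seen by $B_{1,1}$ and the $B_{2,1}/B_{1,2}$ boundary seen by $B_{2,2}$ produce the same block. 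The cleanest way to make this rigorous is to invoke uniqueness of the inverse: $M^{-1}$ is the unique solution of the shift-invariant linear system $XM=E$, and the shifted array $x'_{i,j}:=x_{i-P(f),\,j-P(f)}$ satisfies the same relations on the diagonal-block region because $\delta_{i,j}=\delta_{i-P(f),\,j-P(f)}$; matching the boundary values supplied by Theorems~\ref{up} and \ref{down} then identifies $x'$ with $x$ on the diagonal blocks. Verifying that this matching also holds against the true corner boundary is the delicate point on which the whole argument turns.
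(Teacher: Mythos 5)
Your off-diagonal boundary data ($B_{i+1,i}=B_{2,1}$, $B_{i,i+1}=B_{1,2}$ from Theorems \ref{down} and \ref{up}) and your observation that the unit terms of Eqs.~\eqref{relationRow} and \eqref{relationCol} sit at positions ($j=i+L$, resp.\ $i=j+R$) invariant under $(i,j)\mapsto(i+P(f),j+P(f))$ are both sound, and they do give $B_{2,2}=B_{3,3}=\cdots$: each $B_{i+1,i+1}$ with $i\ge 1$ is produced from its left neighbour $B_{i+1,i}=B_{2,1}$ by one and the same affine transformation. But the step you yourself flag as unresolved, $B_{1,1}=B_{2,2}$, is a genuine gap, and it is exactly where the content of the theorem lies. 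Under left-to-right (or top-to-bottom) propagation, $B_{1,1}$ is structurally different from every other diagonal block: its first $L$ columns are not produced by Eq.~\eqref{relationRow} at all (that recurrence only applies for $j>L$; those entries come from the cofactor formula \eqref{invformula}), so there is no ``same transformation with zero boundary'' to compare against, and the proposed appeal to uniqueness of $M^{-1}$ together with the shift $x'_{i,j}=x_{i-P(f),j-P(f)}$ is a restatement of what must be proved rather than an argument for it. As written, the proposal does not close.

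The paper closes it by reversing the direction of propagation, which removes the corner asymmetry altogether. Since $c_R\neq 0$, the band equation underlying Eq.~\eqref{relationRow} can be solved for its \emph{leftmost} unknown, giving a right-to-left recurrence that computes column $j$ of a row from columns $j+1,\dots,j+L+R$, with the unit term again at a shift-invariant relative position. Hence every diagonal block $B_{i,i}$ --- including $B_{1,1}$ --- is obtained from the super-diagonal block on its right by the same transformation, and by Theorem \ref{up} these right neighbours are all equal to $B_{1,2}$; therefore $B_{i,i}=B_{1,1}$ for every $i$, with no special base case. The truncated block $B_{\lceil n/P(f)\rceil,\lceil n/P(f)\rceil}$ is then the upper-left $(n\bmod P(f))\times(n\bmod P(f))$ corner of $B_{1,1}$ because it is generated by the same recurrences run for fewer steps, as in your treatment of the truncations in Theorems \ref{up} and \ref{down}. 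If you insist on keeping the left-to-right scheme, you must prove the corner matching directly, which in effect reproduces this right-to-left argument; the one-line fix is to propagate from the right.
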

\begin{proof}
	From Theorem 1, we have proven that for any $i$, $B_{i,i+1}=B_{1,2}$. According to Eq. \eqref{relationRow}  , $B_{i,i}$ and $B_{1,1}$ can be obtained from $B_{i,i+1}$ and $B_{1,2}$ through the same transformation, therefore $B_{i,i}$ = $B_{1,1}$. For the bottom-right submatrix, because of its smaller size, it is equal to the top-left part of the submatrices on the diagonal.
\end{proof}

\begin{corollary}
	The inverse of $n$th-order $k$-diagonal Toeplitz matrix over $\mathbb{F}_p$ can be calculated in $O(k^5)+3kP(f)^2$.
\end{corollary}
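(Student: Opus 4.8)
The plan is to reuse the finite-order inversion algorithm of Section~\ref{s2} for a bounded ``seed'' computation, and then replace its $kn^2$ element-filling stage by the production of only three representative blocks, exploiting the block structure established in Theorems~\ref{up}, \ref{down} and \ref{mid}.

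First I would carry out the seed computation, i.e.\ steps~1--3 of the previous corollary, but reduced modulo the period. By Theorem~\ref{periodT} the power $T^{n-L-2R}$ may be replaced by $T^{(n-L-2R)\bmod P(f)}$, and since Proposition~\ref{scope} gives $P(f)\le p^{L+R}-1$ we have $\log P(f)\approx k$, so this power costs $O(k^3\log P(f))=O(k^4)$. The determinant $|M|$ then follows from Eq.~\eqref{det} in $O(k^3)$, and the $O(k^2)$ corner entries supplied by Eq.~\eqref{invformula} are obtained from $O(k^2)$ cofactor determinants of order $O(k)$, i.e.\ in $O(k^5)$. The whole seed stage is therefore bounded by $O(k^5)$.

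Next I would invoke the periodicity of the inverse. By Theorems~\ref{up}, \ref{down} and \ref{mid}, every block strictly above the diagonal equals $B_{1,2}$, every block strictly below it equals $B_{2,1}$, every diagonal block equals $B_{1,1}$, and the blocks in the last row and column are truncations of these three. Consequently $M^{-1}$ is completely determined by $B_{1,1}$, $B_{1,2}$ and $B_{2,1}$, and it is enough to compute the L-shaped region $[1,P(f)]\times[1,2P(f)]\ \cup\ [P(f)+1,2P(f)]\times[1,P(f)]$, which contains exactly $3P(f)^2$ entries. Starting from the seeded corner, I would fill these entries by sweeping the row recurrence Eq.~\eqref{relationRow} rightward through the top rows and the column recurrence Eq.~\eqref{relationCol} downward through the remaining rows; each single entry references $k$ already-computed neighbours and so costs $O(k)$. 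Filling the $3P(f)^2$ entries thus costs $3kP(f)^2$, and adding the seed stage gives the claimed $O(k^5)+3kP(f)^2$.

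The step I expect to be the main obstacle is the bookkeeping that certifies this cost: one must fix an evaluation order for the $3P(f)^2$ entries so that every application of Eq.~\eqref{relationRow} or Eq.~\eqref{relationCol} sees only previously computed values, and one must verify that restricting to the three blocks (rather than all $\lceil n/P(f)\rceil^2$ of them) loses no information, which is precisely the content of Theorems~\ref{up}--\ref{mid}. The legitimacy of the reduced power $T^{(n-L-2R)\bmod P(f)}$ rests on Theorem~\ref{periodT} together with Propositions~\ref{period} and \ref{reverse}, while the truncated boundary blocks add no cost because they are sub-arrays of $B_{1,1}$, $B_{1,2}$ and $B_{2,1}$.
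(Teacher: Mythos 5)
Your proposal is correct and follows essentially the same route as the paper: a bounded seed computation (the mod-$P(f)$ power of $T$, the determinant, and the $O(k^2)$ corner entries via Eq.~\eqref{invformula}, totalling $O(k^5)$), followed by filling only the three representative blocks $B_{1,1}$, $B_{1,2}$, $B_{2,1}$ with the recurrences of Eqs.~\eqref{relationRow} and \eqref{relationCol}, exactly as in the paper's Algorithm~\ref{a2}, giving the $3kP(f)^2$ term. Your write-up is in fact more explicit than the paper's about the evaluation order over the L-shaped region and about why the truncated boundary blocks cost nothing, but the underlying argument and cost accounting are identical.
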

\begin{proof}
	From Theorem \ref{up}, \ref{down} and \ref{mid}, we know that the inverse of a Toeplitz matrix over the field $\mathbb{F}_p$ has a periodic structure, and the period is $P(f)$. It can be represented by three $P(f)*P(f)$ matrices located on the diagonal, above the diagonal, and below the diagonal. Therefore, in the step 4 of Algorithm \ref{a1}, our computational cost is reduced from $kn^2$
	to $3kP(f)^2$. Theorem \ref{periodT} proves that $T^{P(f)}=E$, all calculations involving powers of $T$ can be taken modulo $P(f)$, and $\log P(f) \approx k$, thus reducing the overall complexity to $O(k^5)+3kP(f)^2$.

\end{proof}

\begin{example}
	Consider a 26-order Toeplitz matrix with parameters $[c_{-2},c_{-1},c_0,c_1,c_2]=[1,1,1,1,1]$ over $\mathbb{F}_2$. We can calculate that its inverse period is 5 because $x^5+1=(x + 1)*(x^4 + x^3 + x^2 + x + 1)$. Its inverse is shown in Fig. \ref{ex} left.
\end{example}

\begin{example}
	Consider the 26-order Toeplitz matrices with parameters $[c_{-2},c_{-1},c_0,c_1,c_2]=[1,1,0,1,1]$ over $\mathbb{F}_2$. First, perform factorization: $x^4+x^3+x+1=(x+1)^2(x^2+x+1)$. Next, calculate the period of each factor, which is 3 because $x^3+1=(x+1)(x^2+x+1)$. Finally, calculate the total period $P(f) = \text{lcm}[3,3]*\min\{2^t|2^t \ge 1,2\} = 6$. Its inverse is shown in Fig. \ref{ex} right.
\end{example}
\begin{figure}[h]
	\center
	\includegraphics[width=0.3\linewidth]{./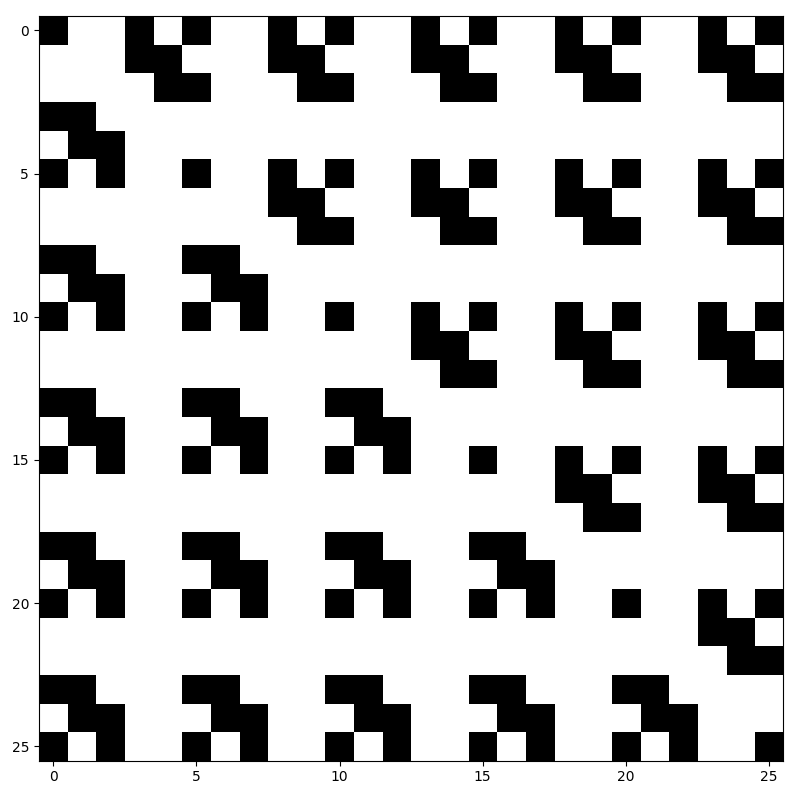}
	\includegraphics[width=0.3\linewidth]{./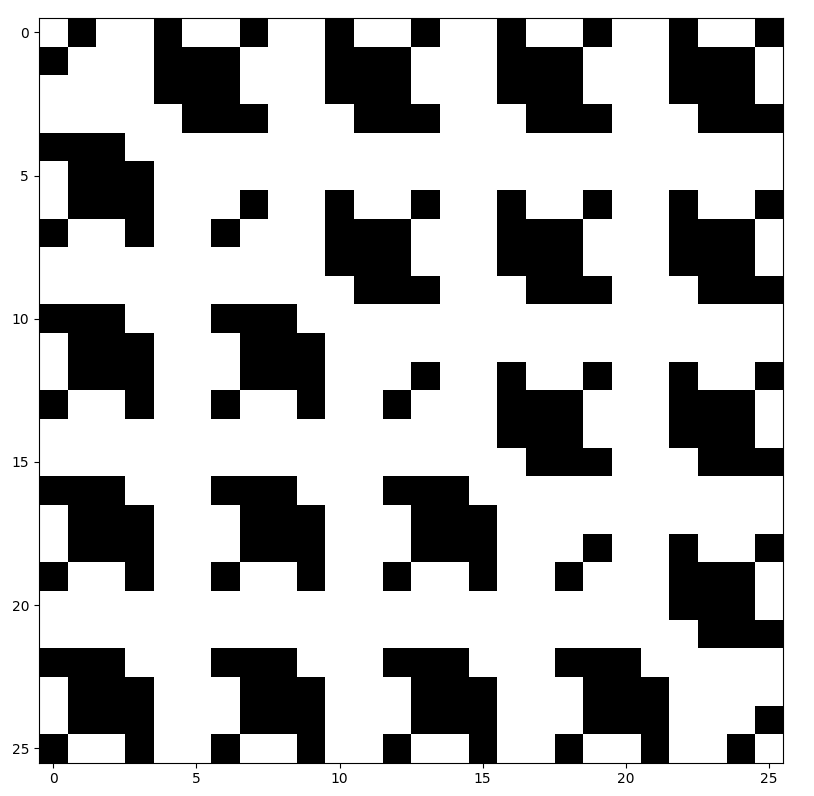}
	
	\caption{The inverse of two 26-order Toeplitz matrix with parameters [1,1,1,1,1] and [1,1,0,1,1]}
	\label{ex}
\end{figure}
We can find that the the inverses over $\mathbb{F}_2$ are periodic. After computing the three small matrices respectively located on the diagonal, above the diagonal, and below the diagonal, the entire inverse matrix can be derived without additional calculations.
	\begin{algorithm}[h]
	\SetAlgoLined
	\label{a2}
	input a banded Toeplitz matrix with parameters $c_{-L},\cdots,c_{R}$ and order $n$\;
	factorize $f$\;
	calculate the period $P(f_i)$ for each irreducible factor $f_i$.
	calculate $P(f)$ according Theorem \ref{pf}\;
	calculate $T^{n-L-2R \mod P(f)}$ and $c_k^{n-L-2R \mod P(f)}$\;
	\For {$1 \le i \le R,1\le j \le L$} {
		calculate $x_{i,j}$ based on Eq. \eqref{invformula}\;
	}
	\For {$1 \le i \le R$, $L <j \le 2P(f)$}{
		calculate $x_{i,j}$ based on Eq. \eqref{relationRow}\;
	}
	
	\For{$R+1 \le i \le P(f)$, $1 <j \le 2P(f)$}{
		calculate $x_{i,j}$ based on Eq. \eqref{relationCol}\;
	}
	\For{$P(f)+1 \le i \le 2P(f)$, $1 <j \le P(f)$}{
		calculate $x_{i,j}$ based on Eq. \eqref{relationCol}\;
	}
	construct and return $M^{-1}$
	\caption{Inverse of a $n$th-order banded Toeplitz matrix $M$.}
\end{algorithm}

\newpage

\section{Conclusion\label{s4}}
We have identified the minimal periods for the determinant and the inverse of a Toeplitz matrix with bandwidth $k$ over $\mathbb{F}_p$ and provide detailed calculations and proofs. For the determinant, the computation complexity is $O(k^4)$, which is independent of the matrix size. This implies that we can
 perform efficient computations for almost any matrix. For the inverse, the computation complexity is $O(k^5)+3kP(f)^2$, where $P(f) \le p^{k-1}-1$. This means that even in the worst cases, when $P(f)$ reaches maximum, we are still able to compute the inverse for high-order Toeplitz matrices with bandwidth less than 20 over $\mathbb{F}_2$.


\section*{Acknowledgments}
This study is financed by Tianjin Science and Technology Bureau, finance code: 21JCYBJC00210.





\bibliographystyle{elsarticle-harv}
\bibliography{ref}







\end{document}